\documentclass[12pt,dvipsnames]{amsart}
\usepackage{color}
\usepackage[colorlinks=true, pdfstartview=FitV, linkcolor=blue, citecolor=blue, urlcolor=blue]{hyperref}
\usepackage{geometry}
\geometry{letterpaper}                   
\usepackage{amsmath, amsthm, amssymb}

\tolerance=1
\emergencystretch=\maxdimen
\hyphenpenalty=10000
\hbadness=10000

\setlength\parskip{\baselineskip}
\setlength\parindent{0pt}

\newtheorem{Theorem}{Theorem}[section]

\newtheorem{Corollary}[Theorem]{Corollary}
\newtheorem{Lemma}[Theorem]{Lemma}

\theoremstyle{definition}

\def\Sym{\operatorname{\mathsf{Sym}}}
\def\DSym{\operatorname{\mathsf{DSym}}}
\def\QSym{\operatorname{\mathsf{QSym}}}
\def\DQsym{\operatorname{\mathsf{DQSym}}}

\begin{document}
\title{Ideals and quotients of diagonally quasisymmetric functions}
\author[Shu Xiao Li]{Shu Xiao Li}
\address[Shu Xiao Li]{Department of Mathematics and Statistics\\ York  University\\       To\-ron\-to, Ontario M3J 1P3\\ CANADA} \email{lishu3@yorku.ca}
\date{\today}
 
\begin{abstract}
In 2004, J-C. Aval, F. Bergeron and N. Bergeron studied the algebra of diagonally quasi-symmetric functions $\operatorname{\mathsf{DQSym}}$ in the ring $\mathbb{Q}[\mathbf{x},\mathbf{y}]$ with two sets of variables. They made conjectures on the structure of the quotient $\mathbb{Q}[\mathbf{x},\mathbf{y}]/\langle\operatorname{\mathsf{DQSym}}^+\rangle$, which is a quasi-symmetric analogue of the diagonal harmonic polynomials. In this paper, we construct a Hilbert basis for this quotient when there are infinitely many variables i.e. $\mathbf{x}=x_1,x_2,\dots$ and $\mathbf{y}=y_1,y_2,\dots$. Then we apply this construction to the case where there are finitely many variables, and compute the second column of its Hilbert matrix.
\end{abstract}

\maketitle
\setcounter{tocdepth}{3}

\section{Introduction}

In the polynomial ring $\mathbb{Q}[\mathbf{x}_n]=\mathbb{Q}[x_1,\dots,x_n]$ with $n$ variables, the ring of symmetric polynomials, $\Sym_n$, (cf. \cite{Mac} or \cite{Sagan}) is the subspace of invariants under the symmetric group $S_n$ action
\[
\sigma\cdot f(x_1,x_2,\dots,x_n)=f(x_{\sigma(1)},x_{\sigma(2)},\dots,x_{\sigma(n)}).
\]
The quotient space $\mathbb{Q}[\mathbf{x}_n]/\langle \Sym_n^+\rangle$ over the ideal generated by symmetric polynomials with no constant term is thus called the coinvariant space of symmetric group. Classic results by \cite{Artin} and \cite{Steinberg} asserts that this quotient forms an $S_n$-module that is isomorphic to the left regular representation. Moreover, considering the natural scalar product
\[
\langle f,g\rangle = \big(f(\partial x_1,\dots,\partial x_n)(g(x_1,\dots,x_n))\big)(0,0,\dots, 0),
\]
this quotient is equal to the orthogonal complement of $\Sym_n$. In particular, the coinvariant space is killed by Laplacian operator $\Delta=\partial x_1^2+\cdots+\partial x_n^2$. Hence, it is also known as the harmonic space.

One can show that $\{h_k(x_k,\dots,x_n):1\leq k\leq n\}$ forms a Gr\"{o}bner basis of $\langle \Sym_n^+\rangle$ with respect to the usual order $x_1>\cdots >x_n$, where $h_k$ is the complete homogeneous basis of degree $k$. As a result, the dimension of $\mathbb{Q}[\mathbf{x}_n]/\langle \Sym_n^+\rangle$ is $n!$.

One generalization is the diagonal harmonic space. In the context of $\mathbb{Q}[\mathbf{x}_n,\mathbf{y}_n]=\mathbb{Q}[x_1,\dots,x_n,y_1,\dots,y_n]$, the diagonally symmetric functions, $\DSym_n$, is the space of invariants under the diagonal action of $S_n$
\[
\sigma\cdot f(x_1,\dots,x_n,y_1,\dots,y_n)=f(x_{\sigma(1)},\dots,x_{\sigma(n)},y_{\sigma(1)},\dots,y_{\sigma(n)}).
\]
The diagonal harmonics, $\mathbb{Q}[\mathbf{x}_n,\mathbf{y}_n]/\langle \DSym_n^+\rangle$, was studied in \cite{GH} and \cite{Haiman} where it was used to prove the $n!$ conjecture and Macdonald positivity. In particular, its dimension turns out to be $(n+1)^{n-1}$. More interesting results and applications can be found in \cite{BBGHT}, \cite{BGHT} and \cite{Haglund}.

The ring of quasi-symmetric functions, $\QSym$, was introduced by \cite{Gessel} as generating function for $P$-partitions \cite{Stanley}. It soon shows great importance in algebraic combinatorics e.g. \cite{ABS}, \cite{GKLLRT}. In our context, $\QSym_n$ can be defined as the space of invariants in $\mathbb{Q}[\mathbf{x}_n]$ under the $S_n$-action of Hivert
\[
\sigma\cdot \left(x_{i_1}^{a_1}\cdots x_{i_k}^{a_k}\right)=x_{j_1}^{a_1}\cdots x_{j_k}^{a_k}
\]
where $i_1<\cdots <i_k$, $j_1<\cdots <j_k$ and $\{j_1,\dots,j_k\}=\{\sigma(i_1),\dots,\sigma(i_k)\}$.

In \cite{AB} and \cite{ABB}, the authors studied the quotient $\mathbb{Q}[\mathbf{x}_n]/\langle \QSym_n^+\rangle$ over the ideal generated by quasi-symmetric polynomials with no constant term, which they called the super-covariant space of $S_n$.
Their main result is that a basis of this quotient corresponds to Dyck paths, and the dimension of the quotient space is the $n$-th Catalan number $C_n$.

After that, in \cite{ABB2}, they extended $\QSym$ to diagonal setting, called diagonally quasi-symmetric functions, $\DQsym$. They described a Hopf algebra structure on $\DQsym$, and made a conjecture about the linear structure of $\mathbb{Q}[\mathbf{x}_n,\mathbf{y}_n]/\langle \DQsym_n^+\rangle$.

In this paper, we continue the study of the linear structure. We start with the case where there are infinitely many variables i.e. $R=\mathbb{Q}[[\mathbf{x},\mathbf{y}]]$ is the ring of formal power series where $\mathbf{x}=x_1,x_2,\dots$ and $\mathbf{y}=y_1,y_2,\dots$. The main result is that we give a description of a Hilbert basis for the quotient space $R/I$ where $I=\overline{\DQsym^+}$ is the closure of the ideal generated by $\DQsym$ without constant terms. This Hilbert basis gives an upper bound on the degree of $\mathbb{Q}[\mathbf{x}_n,\mathbf{y}_n]/\langle \DQsym_n^+\rangle$. We then use it to compute the second column of the Hilbert matrix, which coincides with the conjecture in \cite{ABB2}.

\section{Definitions}

\subsection{Bicompositions}
An element $\tilde{\alpha}=\begin{pmatrix}\tilde{\alpha}_{11}&\tilde{\alpha}_{12}&\cdots\\ \tilde{\alpha}_{21}&\tilde{\alpha}_{22}&\cdots\end{pmatrix}\in \mathbb{N}^{2\mathbb{N}}$ is called a generalized bicomposition if all but finitely many $(\tilde{\alpha}_{1k},\tilde{\alpha}_{2k})$ are $(0,0)$. Let $k$ be the maximum number such that $(\tilde{\alpha}_{1k},\tilde{\alpha}_{2k})\neq(0,0)$. The length of $\tilde{\alpha}$, denoted by $\ell(\tilde{\alpha})$, is $k$. The size of $\tilde{\alpha}$, denoted by $|\tilde{\alpha}|$, is the sum of all its entries. For simplicity, we usually write $\tilde{\alpha}$ as $\big({\tilde{\alpha}_{11} \ \cdots \ \tilde{\alpha}_{1k} \atop \tilde{\alpha}_{21} \ \cdots \ \tilde{\alpha}_{2k}} \big)$. There also exists a generalized bicomposition with length $0$ and size $0$, called the zero bicomposition, denoted by $ \big( {0 \atop 0} \big) $.

Every monomial in $R$ can be expressed as $\mathbf{X}^{\tilde{\alpha}}=x_1^{\tilde{\alpha}_{11}}y_1^{\tilde{\alpha}_{21}}\cdots x_k^{\tilde{\alpha}_{1k}} y_k^{\tilde{\alpha}_{2k}}$ for some generalized bicomposition $\tilde{\alpha}$. A generalized bicomposition $\alpha$ is called a bicomposition if $\ell(\alpha)=0$ or $(\alpha_{1j},\alpha_{2j})\neq(0,0)$ for all $1\leq j\leq \ell(\alpha)$.

In this paper, we use Greek letters to denote bicompositions, and Greek letters with tilde to denote generalized bicompositions.

Let $\tilde{\alpha}$, $\tilde{\beta}$ and $\tilde{\gamma}$ be non-zero generalized bicompositions. We write $\tilde{\alpha}=\tilde{\beta}\tilde{\gamma}$ if $\tilde{\alpha}_{ij}=\tilde{\beta}_{ij}$ for all $1\leq j\leq\ell(\tilde{\alpha})-\ell(\tilde{\gamma})$, $\tilde{\beta}_{ij}=0$ for all $j>\ell(\tilde{\alpha})-\ell(\tilde{\gamma})$ and $\tilde{\alpha}_{i(j+\ell(\tilde{\alpha})-\ell(\tilde{\gamma}))}=\tilde{\gamma}_{ij}$ for all $j\geq 1$. We write $\tilde{\alpha}= \big( {0 \atop 0} \big) \tilde{\beta}$ if $\tilde{\alpha}_{11}=\tilde{\alpha}_{21}=0$ and $\tilde{\alpha}_{i(j+1)}=\tilde{\beta}_{ij}$ for all $j\geq2$.

Note that for each generalized bicomposition $\tilde{\alpha}$ that is not a bicomposition, there is a unique way to decompose it into $\tilde{\alpha}=\tilde{\beta} \big( {0 \atop 0} \big) \gamma$ for some generalized bicomposition $\tilde{\beta}$ and bicomposition $\gamma$.

\subsection{Diagonally quasi-symmetric functions}
The algebra of diagonally quasi-symmetric functions, $\DQsym$, is a subalgebra of $\mathbb{Q}[[\mathbf{x},\mathbf{y}]]$ spanned by monomials indexed by bicompositions
$$M_{\alpha}=\sum_{i_1<\dots<i_k}x_{i_1}^{\alpha_{11}}y_{i_1}^{\alpha_{21}}\cdots x_{i_k}^{\alpha_{1k}} y_{i_k}^{\alpha_{2k}}.$$

As a graded algebra, $\displaystyle\DQsym=\bigoplus_{n\geq0}\DQsym_n$ where $\DQsym_n=\text{span-}\{M_{\alpha}: |\alpha|=n\}$ is the degree $n$ component. The algebra structure is defined in \cite{ABB2}.

\subsection{The \texorpdfstring{$F$}{F} basis}

We define a partial ordering $\preceq$ on bicompositions: $\alpha\preceq\beta$ and $\beta$ covers $\alpha$ if there exists a $1\leq k\leq\ell(\alpha)-1$ such that either $\alpha_{2k}=0$ or $\alpha_{1(k+1)}=0$, and $$\beta=\begin{pmatrix}\alpha_{11}&\cdots&\alpha_{1(k-1)}&\alpha_{1k}+\alpha_{1(k+1)}&\alpha_{1(k+2)}&\cdots&\alpha_{1\ell(\alpha)}\\ \alpha_{21}&\cdots&\alpha_{2(k-1)}&\alpha_{2k}+\alpha_{2(k+1)}&\alpha_{2(k+2)}&\cdots&\alpha_{2\ell(\alpha)} \end{pmatrix}.$$ By triangularity, $\displaystyle \left\{F_{\alpha}=\sum_{\alpha\preceq\beta}M_{\beta}\right\}$ forms a basis for $\DQsym$. For example,

$\displaystyle F_{ \big( {2 \atop 2} \big) }=M_{ \big( {2 \atop 2} \big) }+M_{ \big( {2 \ 0 \atop 0 \ 2} \big) }+M_{ \big( {1 \ 1 \atop 0 \ 2} \big) }+M_{ \big( {1 \ 1 \ 0 \atop 0 \ 0 \ 2} \big) }+M_{ \big( {2 \ 0 \atop 1 \ 1} \big) }+M_{ \big( {2 \ 0 \ 0 \atop 0 \ 1 \ 1} \big) }+M_{ \big( {1 \ 1 \ 0 \atop 0 \ 1 \ 1} \big) }+M_{ \big( {1 \ 1 \ 0 \ 0 \atop 0 \ 0 \ 1 \ 1} \big) }.$

For convenience, we set $F_{ \big( {0 \atop 0} \big) }=1$.

This basis has the following easy but important properties

If $\alpha_{11}\geq 1$ and $\alpha_{11}+\alpha_{21}\geq 2$, then
\begin{equation}\tag{2.1}\label{eq:2.1}
F_{\alpha}=x_1 F_{ \big( {\alpha_{11}-1 \ \alpha_{12} \ \cdots \ \alpha_{1\ell(\alpha)} \atop \ \alpha_{21} \ \ \, \alpha_{22} \ \cdots \ \alpha_{2\ell(\alpha)}} \big) }+F_{\alpha}(x_2,x_3,\dots,y_2,y_3,\dots);
\end{equation}
If $\alpha_{11}=1$ and $\alpha_{21}=0$, then
\begin{equation}\tag{2.2}\label{eq:2.2}
F_{\alpha}=x_1 F_{ \big( {\alpha_{12} \ \cdots \ \alpha_{1\ell(\alpha)} \atop \alpha_{22} \ \cdots \ \alpha_{2\ell(\alpha)}} \big) }(x_2,x_3,\dots,y_2,y_3,\dots)+F_{\alpha}(x_2,x_3,\dots,y_2,y_3,\dots);
\end{equation}
If $\alpha_{11}=0$ and $\alpha_{21}\geq 2$, then
\begin{equation}\tag{2.3}\label{eq:2.3}
F_{\alpha}=y_1 F_{ \big( { \ \ 0 \ \ \ \ \alpha_{12} \ \cdots \ \alpha_{1\ell(\alpha)} \atop \alpha_{21}-1 \ \alpha_{22} \ \cdots \ \alpha_{2\ell(\alpha)}} \big) }+F_{\alpha}(x_2,x_3,\dots,y_2,y_3,\dots);
\end{equation}
If $\alpha_{11}=0$ and $\alpha_{21}=1$, then
\begin{equation}\tag{2.4}\label{eq:2.4}
F_{\alpha}=y_1 F_{ \big( {\alpha_{12} \ \cdots \ \alpha_{1\ell(\alpha)} \atop \alpha_{22} \ \cdots \ \alpha_{2\ell(\alpha)}} \big) }(x_2,x_3,\dots,y_2,y_3,\dots)+F_{\alpha}(x_2,x_3,\dots,y_2,y_3,\dots).
\end{equation}

\section{The \texorpdfstring{$G$}{G} basis}

In this section, we define a basis $\{G_{\tilde{\epsilon}}\}$ indexed by generalized bicompositions for $\mathbb{Q}[[\mathbf{x},\mathbf{y}]]$.

Base cases: $G_{\big( {0 \atop 0} \big)}=1$ and $G_{\tilde{\epsilon}}=F_{\tilde{\epsilon}}$ if $\tilde{\epsilon}$ is a bicomposition. Otherwise, let $\tilde{\epsilon}=\tilde{\alpha}\big( {0 \atop 0} \big)\beta$ where $\beta$ is a non-zero bicomposition. Let $k=\ell(\tilde{\epsilon})-\ell(\beta)-1$.

If $\beta_{11}>0$, 
\begin{equation}\tag{3.1}\label{eq:3.1}
G_{\tilde{\epsilon}}=G_{\tilde{\alpha}\beta}-x_{k+1}G_{\tilde{\alpha} \big( {\beta_{11}-1 \ \beta_{12} \ \cdots \ \beta_{1\ell(\beta)} \atop \ \beta_{21} \ \ \, \beta_{22} \ \cdots \ \beta_{2\ell(\beta)}} \big)}.
\end{equation}
If $\beta_{11}=0$,
\begin{equation}\tag{3.2}\label{eq:3.2}
G_{\tilde{\epsilon}}=G_{\tilde{\alpha}\beta}-y_{k+1}G_{\tilde{\alpha} \big( { \ \ 0 \ \ \ \ \beta_{12} \ \cdots \ \beta_{1\ell(\beta)} \atop \beta_{21}-1 \ \beta_{22} \ \cdots \ \beta_{2\ell(\beta)}} \big)}.
\end{equation}
Inductively, $\{G_{\tilde{\epsilon}}\}$ is defined for all generalized bicomposition $\tilde{\epsilon}$. Clearly $G_{\tilde{\epsilon}}$ is homogeneous in degree $|\tilde{\epsilon}|$. Hence, we have a notion of leading monomial of $G_{\tilde{\epsilon}}$, denoted by $LM(G_{\tilde{\epsilon}})$ with respect to the lexicographic order with $x_1>y_1>x_2>y_2>\cdots$. To show that $\{G_{\tilde{\epsilon}}\}$ form a basis, it suffices to prove the leading monomial of $G_{\tilde{\epsilon}}$ is $\mathbf{X}^{\tilde{\epsilon}}$.

\begin{Lemma}\label{lemma:3.1}
Let $\tilde{\alpha}= \big( {a \atop b} \big) \tilde{\beta}$ be a generalized bicomposition,
\begin{enumerate}
\item if $a=b=0$, then $G_{\tilde{\alpha}}=G_{\tilde{\beta}}(x_2,x_3,\dots,y_2,y_3,\dots)$,
\item if $a>0$, then $G_{\tilde{\alpha}}=x_1G_{ \big( {a-1 \atop b} \big) \tilde{\beta}}+P(x_2,x_3,\dots,y_2,y_3,\dots)$,
\item if $a=0$ and $b>0$, then $G_{\tilde{\alpha}}=y_1G_{ \big( {0 \atop b-1} \big) \tilde{\beta}}+P(x_2,x_3,\dots,y_2,y_3,\dots)$
\end{enumerate}
for some $P\in\mathbb{Q}[[\mathbf{x},\mathbf{y}]]$.
\end{Lemma}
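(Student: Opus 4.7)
\emph{Proof plan.}

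The plan is to induct on the pair $(|\tilde{\alpha}|,\ell(\tilde{\alpha}))$ in lexicographic order. The base case $|\tilde{\alpha}|=0$ is immediate, since then $\tilde{\alpha}=\binom{0}{0}$ and $G_{\tilde{\alpha}}=1$. At each inductive step, split on whether $\tilde{\alpha}$ is itself a bicomposition.

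If $\tilde{\alpha}$ is a bicomposition, then $G_{\tilde{\alpha}}=F_{\tilde{\alpha}}$, so (1) is vacuous. Statement (2) with $a+b\geq 2$ follows directly from \eqref{eq:2.1} since $\binom{a-1}{b}\tilde{\beta}$ remains a bicomposition, so $F_{\binom{a-1}{b}\tilde{\beta}}=G_{\binom{a-1}{b}\tilde{\beta}}$. Statement (2) with $(a,b)=(1,0)$ follows from \eqref{eq:2.2} once we identify $x_1 F_{\tilde{\beta}}(x_2,\ldots,y_2,\ldots)$ with $x_1 G_{\binom{0}{0}\tilde{\beta}}$; this identification is exactly the inductive version of (1) applied to $\binom{0}{0}\tilde{\beta}$, which has size $|\tilde{\alpha}|-1$. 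Statement (3) is symmetric, using \eqref{eq:2.3} and \eqref{eq:2.4}.

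If $\tilde{\alpha}$ is not a bicomposition, write its canonical decomposition $\tilde{\alpha}=\tilde{\gamma}\binom{0}{0}\beta$ with $\beta$ a non-zero bicomposition, and expand $G_{\tilde{\alpha}}$ via \eqref{eq:3.1} or \eqref{eq:3.2} as $G_{\tilde{\gamma}\beta}-z_{k+1}G_{\tilde{\gamma}\beta'}$ with $z\in\{x,y\}$ and $\beta'$ the reduction of $\beta$. Both arguments have length $\ell(\tilde{\alpha})-1$, so the inductive hypothesis applies. For (2) and (3), each of $\tilde{\gamma}\beta$ and $\tilde{\gamma}\beta'$ inherits the first column $\binom{a}{b}$ from $\tilde{\alpha}$, so the inductive form of the corresponding statement lets us factor $x_1$ (resp.\ $y_1$) out of both; the residual $x_1$-coefficient is recognized as $G_{\binom{a-1}{b}\tilde{\beta}}$ by re-applying \eqref{eq:3.1} in reverse, and the remainder lies in $(x_2,x_3,\ldots,y_2,y_3,\ldots)$ because $k+1\geq 2$. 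For (1), if $\tilde{\gamma}$ is empty then $\beta=\tilde{\beta}$ is a bicomposition and we conclude via \eqref{eq:2.1}--\eqref{eq:2.4} (invoking inductive (1) only in the edges $(\beta_{11},\beta_{21})\in\{(1,0),(0,1)\}$); otherwise $\tilde{\gamma}$ begins with $\binom{0}{0}$, and the inductive (1) shifts each term by $x_i\mapsto x_{i+1}$, so that the shifted expression matches the $G$-recursion for $\tilde{\beta}$ itself after checking that $k+1$ aligns with the shifted index $k'+2$.

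The main obstacle will be the bookkeeping of the index $k+1$ through the substitution $x_i\mapsto x_{i+1}$ in (1), together with the edge cases $(a,b)\in\{(1,0),(0,1)\}$ in (2) and (3), where prepending the decremented column $\binom{a-1}{b}=\binom{0}{0}$ introduces a fresh zero column at position $1$ and thereby alters the canonical decomposition. Each branch reduces to index-chasing once the nested induction is set up, but the branches must be checked separately to close the argument.
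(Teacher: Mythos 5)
Your plan is correct and follows essentially the same route as the paper's proof: reduce along the $F$-identities \eqref{eq:2.1}--\eqref{eq:2.4} when $\tilde\alpha$ is a bicomposition, otherwise peel off the last zero column via the $G$-recursion \eqref{eq:3.1}/\eqref{eq:3.2}, apply the induction to the two shorter arguments, and either factor out $x_1$ or $y_1$ (reassembling with \eqref{eq:3.1}/\eqref{eq:3.2}) or track the variable shift under $x_i\mapsto x_{i+1}$ in case (1). The only cosmetic difference is that you organize the whole argument as a single lexicographic induction on $(|\tilde\alpha|,\ell(\tilde\alpha))$, whereas the paper inducts on $\ell(\tilde\alpha)$ with inner inductions on $\ell(\tilde\beta)$ and on $k$; both bookkeepings close, and the branches you flag (the index $k+1$ vs.\ $k'+2$ alignment, and the edges $(a,b)\in\{(1,0),(0,1)\}$) are exactly the cases the paper checks explicitly.
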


\begin{proof}
We prove by induction on the length of $\tilde{\alpha}$.
\begin{enumerate}
\item If $\tilde{\alpha}= \big( {0 \atop 0} \big) $, then $G_{\tilde{\alpha}}=1$ and we are done.
\item If $\tilde{\beta}=\beta$ is a bicomposition,
\begin{enumerate}
\item\label{2a} if $a=b=0$ and $\beta$ non-zero,
\begin{enumerate}
\item if $\beta_{11}\geq1$ and $\beta_{11}+\beta_{21}\geq 2$, using (\ref{eq:2.1}) and (\ref{eq:3.1}), we get
\begin{align*}
G_{\tilde{\alpha}}&=G_{\beta}-x_1G_{ \big( {\beta_{11}-1 \ \beta_{12} \ \cdots \ \beta_{1\ell(\beta)} \atop \ \beta_{21} \ \ \, \beta_{22} \ \cdots \ \beta_{2\ell(\beta)}} \big)}\\
&=F_{\beta}-x_1F_{ \big( {\beta_{11}-1 \ \beta_{12} \ \cdots \ \beta_{1\ell(\beta)} \atop \ \beta_{21} \ \ \, \beta_{22} \ \cdots \ \beta_{2\ell(\beta)}} \big)}\\
&=F_{\beta}(x_2,x_3,\dots,y_2,y_3,\dots)=G_{\beta}(x_2,x_3,\dots,y_2,y_3,\dots)
\end{align*}
and the lemma follows.
\item if $\beta_{11}=1$ and $\beta_{21}=0$, using (\ref{eq:2.2}), (\ref{eq:3.1}) and induction on $\ell(\tilde{\beta})$, we get
\begin{align*}
G_{\tilde{\alpha}}&=G_{\beta}-x_1G_{ \big( {0 \ \beta_{12} \ \cdots \ \beta_{1\ell(\beta)} \atop 0 \ \beta_{22} \ \cdots \ \beta_{2\ell(\beta)}} \big)}\\
&=G_{\beta}-x_1G_{\big( {\beta_{12} \ \cdots \ \beta_{1\ell(\beta)} \atop \beta_{22} \ \cdots \ \beta_{2\ell(\beta)}} \big)}(x_2,x_3,\dots,y_2,y_3,\dots)\\
&=F_{\beta}-x_1F_{\big( {\beta_{12} \ \cdots \ \beta_{1\ell(\beta)} \atop \beta_{22} \ \cdots \ \beta_{2\ell(\beta)}} \big)}(x_2,x_3,\dots,y_2,y_3,\dots)\\
&=F_{\beta}(x_2,x_3,\dots,y_2,y_3,\dots)=G_{\beta}(x_2,x_3,\dots,y_2,y_3,\dots)
\end{align*}
and the lemma follows.
\item if $\beta_{11}=0$ and $\beta_{21}\geq 2$, using (\ref{eq:2.3}) and (\ref{eq:3.2}), we get
\begin{align*}
G_{\tilde{\alpha}}&=G_{\beta}-y_1G_{ \big( { \ \ 0 \ \ \ \ \beta_{12} \ \cdots \ \beta_{1\ell(\beta)} \atop \beta_{21}-1 \ \beta_{22} \ \cdots \ \beta_{2\ell(\beta)}} \big)}\\
&=F_{\beta}-y_1F_{ \big( { \ \ 0 \ \ \ \ \beta_{12} \ \cdots \ \beta_{1\ell(\beta)} \atop \beta_{21}-1 \ \beta_{22} \ \cdots \ \beta_{2\ell(\beta)}} \big)}\\
&=F_{\beta}(x_2,x_3,\dots,y_2,y_3,\dots)=G_{\beta}(x_2,x_3,\dots,y_2,y_3,\dots)
\end{align*}
and the lemma follows.
\item if $\beta_{11}=0$ and $\beta_{21}=1$, using (\ref{eq:2.4}), (\ref{eq:3.2}) and induction on $\ell(\tilde{\beta})$, we get
\begin{align*}
G_{\tilde{\alpha}}&=G_{\beta}-y_1G_{ \big( {0 \ \beta_{12} \ \cdots \ \beta_{1\ell(\beta)} \atop 0 \ \beta_{22} \ \cdots \ \beta_{2\ell(\beta)}} \big)}\\
&=G_{\beta}-y_1G_{\big( {\beta_{12} \ \cdots \ \beta_{1\ell(\beta)} \atop \beta_{22} \ \cdots \ \beta_{2\ell(\beta)}} \big)}(x_2,x_3,\dots,y_2,y_3,\dots)\\
&=F_{\beta}-y_1F_{\big( {\beta_{12} \ \cdots \ \beta_{1\ell(\beta)} \atop \beta_{22} \ \cdots \ \beta_{2\ell(\beta)}} \big)}(x_2,x_3,\dots,y_2,y_3,\dots)\\
&=F_{\beta}(x_2,x_3,\dots,y_2,y_3,\dots)=G_{\beta}(x_2,x_3,\dots,y_2,y_3,\dots)
\end{align*}
and the lemma follows.
\end{enumerate}
\item if $a\geq1$ and $a+b\geq 2$, by definition $G_{\tilde{\alpha}}=F_{\tilde{\alpha}}$. Using (\ref{eq:2.1}), we get
$$G_{\tilde{\alpha}}=F_{\tilde{\alpha}}=x_1F_{ \big( {a-1 \atop b} \big) \beta}+F_{\tilde{\alpha}}(x_2,x_3,\dots,y_2,y_3,\dots)$$
and the lemma follows, with $P=F_{\tilde{\alpha}}(x_2,x_3,\dots,y_2,y_3,\dots)$.
\item if $a=1$ and $b=0$, by definition $G_{\tilde{\alpha}}=F_{\tilde{\alpha}}$. Using (\ref{eq:2.2}) and (\ref{2a}). we get
\begin{align*}
G_{\tilde{\alpha}}=F_{\tilde{\alpha}}&=x_1F_{\beta}(x_2,x_3,\dots,y_2,y_3,\dots)+F_{\tilde{\alpha}}(x_2,x_3,\dots,y_2,y_3,\dots)\\
&=x_1G_{ \big( {0 \atop 0} \big) \beta}+F_{\tilde{\alpha}}(x_2,x_3,\dots,y_2,y_3,\dots)
\end{align*}
and the lemma follows with $P=F_{\tilde{\alpha}}(x_2,x_3,\dots,y_2,y_3,\dots)$.
\item if $a=0$ and $b\geq2$, by definition $G_{\tilde{\alpha}}=F_{\tilde{\alpha}}$. Using (\ref{eq:2.3}), we get
$$G_{\tilde{\alpha}}=F_{\tilde{\alpha}}=y_1F_{ \big( {a \atop b-1} \big) \beta}+F_{\tilde{\alpha}}(x_2,x_3,\dots,y_2,y_3,\dots)$$
and the lemma follows, with $P=F_{\tilde{\alpha}}(x_2,x_3,\dots,y_2,y_3,\dots)$.
\item if $a=0$ and $b=1$, by definition $G_{\tilde{\alpha}}=F_{\tilde{\alpha}}$. Using (\ref{eq:2.4}) and (\ref{2a}). we get
\begin{align*}
G_{\tilde{\alpha}}=F_{\tilde{\alpha}}&=y_1F_{\beta}(x_2,x_3,\dots,y_2,y_3,\dots)+F_{\tilde{\alpha}}(x_2,x_3,\dots,y_2,y_3,\dots)\\
&=y_1G_{ \big( {0 \atop 0} \big) \beta}+F_{\tilde{\alpha}}(x_2,x_3,\dots,y_2,y_3,\dots)
\end{align*}
and the lemma follows with $P=F_{\tilde{\alpha}}(x_2,x_3,\dots,y_2,y_3,\dots)$.
\end{enumerate}
\item In the general case, let $\tilde{\alpha}=\tilde{\gamma} \big( {0 \atop 0} \big) \beta$ where $\beta$ is a non-empty bicomposition and $k=\ell(\tilde{\alpha})-\ell(\beta)-1$. We prove by induction on $k$. If $k=1$, then we are back in case (\ref{2a}) above. Hence, we assume $k>1$ and $\tilde{\gamma}= \big( {a \atop b} \big) \tilde{\mu}$.
\begin{enumerate}
\item If $a=b=0$,
\begin{enumerate}
\item if $\beta_{11}\geq1$, by induction and (\ref{eq:3.1}), we have
\begin{align*}
G_{\tilde{\alpha}}=&G_{ \big( {0 \atop 0} \big) \tilde{\mu} \big( {0 \atop 0} \big) \beta}=G_{ \big( {0 \atop 0} \big) \tilde{\mu}\beta}-x_kG_{ \big( {0 \atop 0} \big) \tilde{\mu} \big( {\beta_{11}-1 \ \beta_{12} \ \cdots \ \beta_{1\ell(\beta)} \atop \ \beta_{21} \ \ \, \beta_{22} \ \cdots \ \beta_{2\ell(\beta)}} \big)}\\
=&G_{\tilde{\mu}\beta}(x_2,x_3,\dots,y_2,y_3,\dots)\\
&-x_{(k-1)+1}G_{\tilde{\mu} \big( {\beta_{11}-1 \ \beta_{12} \ \cdots \ \beta_{1\ell(\beta)} \atop \ \beta_{21} \ \ \, \beta_{22} \ \cdots \ \beta_{2\ell(\beta)}} \big)}(x_2,x_3,\dots,y_2,y_3,\dots)\\
=&G_{\tilde{\mu} \big( {0 \atop 0} \big) \beta}(x_2,x_3,\dots,y_2,y_3,\dots)
\end{align*}
and the lemma follows.
\item if $\beta_{11}=0$, by induction and (\ref{eq:3.2}), we have
\begin{align*}
G_{\tilde{\alpha}}=&G_{ \big( {0 \atop 0} \big) \tilde{\mu} \big( {0 \atop 0} \big) \beta}=G_{ \big( {0 \atop 0} \big) \tilde{\mu}\beta}-y_kG_{ \big( {0 \atop 0} \big) \tilde{\mu} \big( { \ \ 0 \ \ \ \ \beta_{12} \ \cdots \ \beta_{1\ell(\beta)} \atop \beta_{21}-1 \ \beta_{22} \ \cdots \ \beta_{2\ell(\beta)}} \big)}\\
=&G_{\tilde{\mu}\beta}(x_2,x_3,\dots,y_2,y_3,\dots)\\
&-y_{(k-1)+1}G_{\tilde{\mu} \big( { \ \ 0 \ \ \ \ \beta_{12} \ \cdots \ \beta_{1\ell(\beta)} \atop \beta_{21}-1 \ \beta_{22} \ \cdots \ \beta_{2\ell(\beta)}} \big)}(x_2,x_3,\dots,y_2,y_3,\dots)\\
=&G_{\tilde{\mu} \big( {0 \atop 0} \big) \beta}(x_2,x_3,\dots,y_2,y_3,\dots)
\end{align*}
and the lemma follows.
\end{enumerate}
\item If $a\geq1$,
\begin{enumerate}
\item if $\beta_{11}\geq1$, by induction and (\ref{eq:3.1}), we have
\begin{align*}
G_{\tilde{\alpha}}=&G_{ \big( {a \atop b} \big) \tilde{\mu} \big( {0 \atop 0} \big) \beta}=G_{ \big( {a \atop b} \big) \tilde{\mu}\beta}-x_kG_{ \big( {a \atop b} \big) \tilde{\mu} \big( {\beta_{11}-1 \ \beta_{12} \ \cdots \ \beta_{1\ell(\beta)} \atop \ \beta_{21} \ \ \, \beta_{22} \ \cdots \ \beta_{2\ell(\beta)}} \big)}\\
=&x_1G_{ \big( {a-1 \atop b} \big) \tilde{\mu}\beta}+P_1(x_2,x_3,\dots,y_2,y_3,\dots)\\
&-x_k\Big(x_1G_{ \big( {a-1 \atop b} \big) \tilde{\mu} \big( {\beta_{11}-1 \ \beta_{12} \ \cdots \ \beta_{1\ell(\beta)} \atop \ \beta_{21} \ \ \, \beta_{22} \ \cdots \ \beta_{2\ell(\beta)}} \big)}\\
&+P_2(x_2,x_3,\dots,y_2,y_3,\dots)\Big)\\
=&x_1\left(G_{ \big( {a-1 \atop b} \big) \tilde{\mu}\beta}-x_kG_{ \big( {a-1 \atop b} \big) \tilde{\mu} \big( {\beta_{11}-1 \ \beta_{12} \ \cdots \ \beta_{1\ell(\beta)} \atop \ \beta_{21} \ \ \, \beta_{22} \ \cdots \ \beta_{2\ell(\beta)}} \big)}\right)\\
&+P(x_2,x_3,\dots,y_2,y_3,\dots)\\
=&x_1G_{ \big( {a-1 \atop b} \big) \tilde{\mu} \big( {0 \atop 0} \big) \beta}+P(x_2,x_3,\dots,y_2,y_3,\dots)
\end{align*}
and the lemma follows with $P=P_1-x_kP_2$.
\item if $\beta_{11}=0$, by induction and (\ref{eq:3.2}), we have
\begin{align*}
G_{\tilde{\alpha}}=&G_{ \big( {a \atop b} \big) \tilde{\mu} \big( {0 \atop 0} \big) \beta}=G_{ \big( {a \atop b} \big) \tilde{\mu}\beta}-y_kG_{ \big( {a \atop b} \big) \tilde{\mu} \big( { \ \ 0 \ \ \ \ \beta_{12} \ \cdots \ \beta_{1\ell(\beta)} \atop \beta_{21}-1 \ \beta_{22} \ \cdots \ \beta_{2\ell(\beta)}} \big)}\\
=&x_1G_{ \big( {a-1 \atop b} \big) \tilde{\mu}\beta}+P_1(x_2,x_3,\dots,y_2,y_3,\dots)\\
&-y_k\Big(x_1G_{ \big( {a-1 \atop b} \big) \tilde{\mu} \big( { \ \ 0 \ \ \ \ \beta_{12} \ \cdots \ \beta_{1\ell(\beta)} \atop \beta_{21}-1 \ \beta_{22} \ \cdots \ \beta_{2\ell(\beta)}} \big)}\\
&+P_2(x_2,x_3,\dots,y_2,y_3,\dots)\Big)\\
=&x_1\left(G_{ \big( {a-1 \atop b} \big) \tilde{\mu}\beta}-y_kG_{ \big( {a-1 \atop b} \big) \tilde{\mu} \big( { \ \ 0 \ \ \ \ \beta_{12} \ \cdots \ \beta_{1\ell(\beta)} \atop \beta_{21}-1 \ \beta_{22} \ \cdots \ \beta_{2\ell(\beta)}} \big)}\right)\\
&+P(x_2,x_3,\dots,y_2,y_3,\dots)\\
=&x_1G_{ \big( {a-1 \atop b} \big) \tilde{\mu} \big( {0 \atop 0} \big) \beta}+P(x_2,x_3,\dots,y_2,y_3,\dots)
\end{align*}
and the lemma follows with $P=P_1-y_kP_2$.
\end{enumerate}
\item If $a=0$ and $b\geq1$,
\begin{enumerate}
\item if $\beta_{11}\geq1$, by induction and (\ref{eq:3.1}), we have
\begin{align*}
G_{\tilde{\alpha}}=&G_{ \big( {0 \atop b} \big) \tilde{\mu} \big( {0 \atop 0} \big) \beta}=G_{ \big( {0 \atop b} \big) \tilde{\mu}\beta}-x_kG_{ \big( {0 \atop b} \big) \tilde{\mu} \big( {\beta_{11}-1 \ \beta_{12} \ \cdots \ \beta_{1\ell(\beta)} \atop \ \beta_{21} \ \ \, \beta_{22} \ \cdots \ \beta_{2\ell(\beta)}} \big)}\\
=&y_1G_{ \big( {0 \atop b-1} \big) \tilde{\mu}\beta}+P_1(x_2,x_3,\dots,y_2,y_3,\dots)\\
&-x_k\Big(y_1G_{ \big( {0 \atop b-1} \big) \tilde{\mu} \big( {\beta_{11}-1 \ \beta_{12} \ \cdots \ \beta_{1\ell(\beta)} \atop \ \beta_{21} \ \ \, \beta_{22} \ \cdots \ \beta_{2\ell(\beta)}} \big)}\\
&+P_2(x_2,x_3,\dots,y_2,y_3,\dots)\Big)\\
=&y_1\left(G_{ \big( {0 \atop b-1} \big) \tilde{\mu}\beta}-x_kG_{ \big( {0 \atop b-1} \big) \tilde{\mu} \big( {\beta_{11}-1 \ \beta_{12} \ \cdots \ \beta_{1\ell(\beta)} \atop \ \beta_{21} \ \ \, \beta_{22} \ \cdots \ \beta_{2\ell(\beta)}} \big)}\right)\\
&+P(x_2,x_3,\dots,y_2,y_3,\dots)\\
=&y_1G_{ \big( {0 \atop b-1} \big) \tilde{\mu} \big( {0 \atop 0} \big) \beta}+P(x_2,x_3,\dots,y_2,y_3,\dots)
\end{align*}
and the lemma follows with $P=P_1-x_kP_2$.
\item if $\beta_{11}=0$, by induction and (\ref{eq:3.2}), we have
\begin{align*}
G_{\tilde{\alpha}}=&G_{ \big( {0 \atop b} \big) \tilde{\mu} \big( {0 \atop 0} \big) \beta}=G_{ \big( {0 \atop b} \big) \tilde{\mu}\beta}-y_kG_{ \big( {0 \atop b} \big) \tilde{\mu} \big( { \ \ 0 \ \ \ \ \beta_{12} \ \cdots \ \beta_{1\ell(\beta)} \atop \beta_{21}-1 \ \beta_{22} \ \cdots \ \beta_{2\ell(\beta)}} \big)}\\
=&y_1G_{ \big( {0 \atop b-1} \big) \tilde{\mu}\beta}+P_1(x_2,x_3,\dots,y_2,y_3,\dots)\\
&-y_k\Big(y_1G_{ \big( {0 \atop b-1} \big) \tilde{\mu} \big( { \ \ 0 \ \ \ \ \beta_{12} \ \cdots \ \beta_{1\ell(\beta)} \atop \beta_{21}-1 \ \beta_{22} \ \cdots \ \beta_{2\ell(\beta)}} \big)}\\
&+P_2(x_2,x_3,\dots,y_2,y_3,\dots)\Big)\\
=&y_1\left(G_{ \big( {0 \atop b-1} \big) \tilde{\mu}\beta}-y_kG_{ \big( {0 \atop b-1} \big) \tilde{\mu} \big( { \ \ 0 \ \ \ \ \beta_{12} \ \cdots \ \beta_{1\ell(\beta)} \atop \beta_{21}-1 \ \beta_{22} \ \cdots \ \beta_{2\ell(\beta)}} \big)}\right)\\
&+P(x_2,x_3,\dots,y_2,y_3,\dots)\\
=&y_1G_{ \big( {0 \atop b-1} \big) \tilde{\mu} \big( {0 \atop 0} \big) \beta}+P(x_2,x_3,\dots,y_2,y_3,\dots)
\end{align*}
and the lemma follows with $P=P_1-y_kP_2$.
\end{enumerate}
\end{enumerate}
\end{enumerate}
\end{proof}

\begin{Corollary}
Let $\tilde{\epsilon}$ be a generalized bicomposition, then the leading monomial of $G_{\tilde{\epsilon}}$ is $\mathbf{X}^{\tilde{\epsilon}}$. Hence, $\{G_{\tilde{\alpha}}\}$ forms a Hilbert basis for $R$.
\end{Corollary}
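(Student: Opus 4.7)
The plan is to establish the leading-monomial assertion by induction on the pair $(|\tilde{\epsilon}|,\ell(\tilde{\epsilon}))$ in lexicographic order, and then deduce the Hilbert basis claim from triangularity. All the real work has been absorbed into Lemma~\ref{lemma:3.1}, which in each case writes $G_{\tilde{\epsilon}}$ in a form that makes the expected leading term immediate.

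The base case $\tilde{\epsilon}= \big( {0 \atop 0} \big) $ is trivial since $G_{\tilde{\epsilon}}=1$. For the inductive step, I would write $\tilde{\epsilon}= \big( {a \atop b} \big) \tilde{\beta}$ and split along the three cases of Lemma~\ref{lemma:3.1}. When $a>0$, part~(2) gives $G_{\tilde{\epsilon}}=x_1 G_{ \big( {a-1 \atop b} \big) \tilde{\beta}}+P$ with $P\in\mathbb{Q}[[x_2,x_3,\dots,y_2,y_3,\dots]]$; since $x_1$ strictly dominates every variable appearing in $P$ under the lex order $x_1>y_1>x_2>y_2>\cdots$, the inductive hypothesis applied to $ \big( {a-1 \atop b} \big) \tilde{\beta}$ (strictly smaller size) yields $LM(G_{\tilde{\epsilon}})=x_1\mathbf{X}^{ \big( {a-1 \atop b} \big) \tilde{\beta}}=\mathbf{X}^{\tilde{\epsilon}}$. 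The case $a=0$, $b>0$ is parallel using part~(3) with $y_1$ in place of $x_1$.

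The remaining case $a=b=0$ uses part~(1), where $G_{\tilde{\epsilon}}=G_{\tilde{\beta}}(x_2,x_3,\dots,y_2,y_3,\dots)$; here I would invoke the hypothesis on $\tilde{\beta}$ (same size but strictly smaller length) and observe that the index shift $x_i\mapsto x_{i+1}$, $y_i\mapsto y_{i+1}$ is order-preserving on homogeneous monomials of a fixed degree under our lex order, so it commutes with passing to leading monomials. This identifies $LM(G_{\tilde{\epsilon}})$ with the shift of $\mathbf{X}^{\tilde{\beta}}$, namely $\mathbf{X}^{ \big( {0 \atop 0} \big) \tilde{\beta}}=\mathbf{X}^{\tilde{\epsilon}}$.

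The Hilbert basis conclusion then follows formally: the map $\tilde{\epsilon}\mapsto\mathbf{X}^{\tilde{\epsilon}}$ is a degree-preserving bijection between generalized bicompositions and monomials of $R$, and each $G_{\tilde{\epsilon}}$ is homogeneous of degree $|\tilde{\epsilon}|$, so distinct leading monomials within each homogeneous component produce a basis via the standard triangular reduction. I do not foresee any serious obstacle in executing this plan; all the combinatorial difficulty has been absorbed into Lemma~\ref{lemma:3.1}, and the only point requiring a moment's thought is the order-preservation of the variable shift used in the $a=b=0$ case, which is immediate from the way the lex order interleaves the $x$'s and $y$'s.
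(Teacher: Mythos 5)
Your proof is correct and follows essentially the same route as the paper: a double induction on $(|\tilde{\epsilon}|,\ell(\tilde{\epsilon}))$ (the paper phrases it as ``induction on $\ell(\tilde{\epsilon})$ and $|\tilde{\epsilon}|$'' and invokes each in the appropriate case), splitting on $(a,b)$ and invoking the corresponding part of Lemma~\ref{lemma:3.1}. You spell out two small points the paper leaves implicit — that the variable shift $x_i\mapsto x_{i+1}$, $y_i\mapsto y_{i+1}$ preserves the lex order, and that a monomial containing $x_1$ (resp.\ $y_1$) dominates any monomial in $P(x_2,x_3,\dots,y_2,y_3,\dots)$ — but these are exactly the observations the paper's proof relies on.
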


\begin{proof}
We prove by induction on $\ell(\tilde{\epsilon})$ and $|\tilde{\epsilon}|$. If $\tilde{\epsilon}=\big( {0 \atop 0} \big)$, by definition $G_{\tilde{\epsilon}}=1=X^{\tilde{\epsilon}}$. Otherwise, let $\tilde{\epsilon}=\big( {a \atop b} \big)\tilde{\beta}$.
\begin{enumerate}
\item If $a=b=0$ and $\tilde{\beta}$ non-zero, by induction on $\ell(\tilde{\epsilon})$ and Lemma \ref{lemma:3.1}, we have
$$LM(G_{\tilde{\epsilon}})=LM(G_{\tilde{\beta}}(x_2,x_3,\dots,y_2,y_3,\dots))=(x_2,x_3,\dots,y_2,y_3,\dots)^{\tilde{\beta}}=\mathbf{X}^{\tilde{\epsilon}}.$$
\item If $a\geq1$, by induction on $|\tilde{\epsilon}|$ and Lemma \ref{lemma:3.1}, we have
$$LM(G_{\tilde{\epsilon}})=LM\left(x_1G_{\big( {a-1 \atop b} \big)\tilde{\beta}}\right)=\mathbf{X}^{\tilde{\epsilon}}.$$
\item If $a=0$ and $b\geq1$, by induction on $|\tilde{\epsilon}|$ and Lemma \ref{lemma:3.1}, we have
$$LM(G_{\tilde{\epsilon}})=LM\left(y_1G_{\big( {0 \atop b-1} \big)\tilde{\beta}}\right)=\mathbf{X}^{\tilde{\epsilon}}.$$
\end{enumerate}

\end{proof}

\section{The Hilbert Basis}

The set $\{x^{\tilde{\alpha}}F_{\beta}\}$ is a spanning set of the ideal $I$. For each $\tilde{\alpha}$ and $\beta$, we write $x^{\tilde{\alpha}}F_{\beta}$ in terms of the $G$ basis by the following rules.

(1) We reorder the product $x^{\tilde{\alpha}}F_{\beta}$ as $\cdots\left( x_2^{\tilde{\alpha}_{21}}\left( y_2^{\tilde{\alpha}_{22}}\left( x_1^{\tilde{\alpha}_{11}}\left( y_1^{\tilde{\alpha}_{21}}F_{\beta}\right) \right) \right) \right) $.

(2) We reduce the above product recursively using (\ref{eq:3.1})
\begin{equation}\tag{4.1}
x_iG_{\tilde{\gamma}}=x_iG_{ \big( {\cdots \ \tilde{\gamma}_{1i} \ \cdots \atop \cdots \ \tilde{\gamma}_{2i} \ \cdots}\big)}=G_{ \big( {\cdots \ \tilde{\gamma}_{1i}+1 \ \cdots \atop \cdots \ \ \tilde{\gamma}_{2i} \ \ \cdots}\big)}-G_{ \big( {\cdots \ 0 \ \tilde{\gamma}_{1i}+1 \ \cdots \atop \cdots \ 0 \ \ \tilde{\gamma}_{2i} \ \ \cdots}\big)};
\end{equation}

or using (\ref{eq:3.2}) when $\tilde{\gamma}_{1i}=0$ for some i,
\begin{equation}\tag{4.2}
y_iG_{\tilde{\gamma}}=y_iG_{ \big( {\cdots \ \tilde{\gamma}_{1i} \ \cdots \atop \cdots \ \tilde{\gamma}_{2i} \ \cdots}\big)}=G_{ \big( {\cdots \ \ \tilde{\gamma}_{1i} \ \ \cdots \atop \cdots \ \tilde{\gamma}_{2i}+1 \ \cdots}\big)}-G_{ \big( {\cdots \ 0 \ \ \tilde{\gamma}_{1i} \ \ \cdots \atop \cdots \ 0 \ \tilde{\gamma}_{2i}+1 \ \cdots}\big)}.
\end{equation}

(3) When $\tilde{\gamma}_{1i}=a>0$, we reduce $y_iG_{\tilde{\gamma}}$ as
\begin{align*}\tag{4.3}
y_1G_{\tilde{\gamma}}&=y_iG_{ \big( {\cdots \ \ a \ \ \cdots \atop \cdots \ \tilde{\gamma}_{2i} \ \cdots}\big)}=y_i\left( G_{ \big( {\cdots \ 0 \ \ a \ \ \cdots \atop \cdots \ 0 \ \tilde{\gamma}_{2i} \ \cdots}\big)} + x_iG_{ \big( {\cdots \ a-1 \ \cdots \atop \cdots \ \tilde{\gamma}_{2i} \ \cdots}\big)} \right) \\
&=y_iG_{ \big( {\cdots \ 0 \ \ a \ \ \cdots \atop \cdots \ 0 \ \tilde{\gamma}_{2i} \ \cdots}\big)} + x_i\left( y_iG_{ \big( {\cdots \ a-1 \ \cdots \atop \cdots \ \tilde{\gamma}_{2i} \ \cdots}\big)}\right) =\cdots\\
&=\sum_{k=0}^{a-1}x_i^k\left( y_iG_{ \big( {\cdots \ 0 \ a-k \ \cdots \atop \cdots \ 0 \ \tilde{\gamma}_{2i} \ \cdots}\big)}\right) + x_i^a\left( y_i G_{ \big( {\cdots \ \ 0 \ \ \cdots \atop \cdots \ \tilde{\gamma}_{2i} \ \cdots}\big)}\right).
\end{align*}

The $``\cdots"$ above means $\tilde{\gamma}_{11} \ \cdots \ \tilde{\gamma}_{1(i-1)}$, $\tilde{\gamma}_{1(i+1)} \ \cdots \ \tilde{\gamma}_{1\ell(\tilde{\gamma})}$, $\tilde{\gamma}_{21} \ \cdots \ \tilde{\gamma}_{2(i-1)}$ or

$\tilde{\gamma}_{2(i+1)} \ \cdots \ \tilde{\gamma}_{1\ell(\tilde{\gamma})}$ with respect to their positions in the generalized bicomposition.

For example,
\begin{align*}
&y_1F_{\big( {1 \atop 0} \big)}=y_1\left( G_{\big( {0 \ 1 \atop 0 \ 0} \big)} + x_1G_{\big( {0 \atop 0} \big)}\right) = y_1G_{\big( {0 \ 1 \atop 0 \ 0} \big)}+x_1y_1G_{\big( {0 \atop 0} \big)}\\
=&G_{\big( {0 \ 1 \atop 1 \ 0} \big)} - G_{\big( {0 \ 0 \ 1 \atop 0 \ 1 \ 0} \big)}+x_1\left( G_{\big( {0 \atop 1} \big)}-G_{\big( {0 \ 0 \atop 0 \ 1} \big)}\right)\\
=&G_{\big( {0 \ 1 \atop 1 \ 0} \big)} - G_{\big( {0 \ 0 \ 1 \atop 0 \ 1 \ 0} \big)}+G_{\big( {1 \atop 1} \big)} - G_{\big( {0 \ 1 \atop 0 \ 1} \big)} - G_{\big( {1 \ 0 \atop 0 \ 1} \big)} + G_{\big( {0 \ 1 \ 0 \atop 0 \ 0 \ 1} \big)}.
\end{align*}

For each of the above rule, we choose one $G_{\tilde{\eta}}$ as leading basis element. We define a function $\phi$ from $\left(\{x_i\}\times\{G_{\tilde{\gamma}}\}\right)\cup\left(\{y_i\}\times\{G_{\tilde{\gamma}}\}\right)$ to $\{G_{\tilde{\gamma}}\}$ as follows. In the case of rules (4.1), (4.2), we choose $\phi\left( x_i,G_{\tilde{\gamma}}\right) =G_{ \big( {\cdots \ 0 \ \tilde{\gamma}_{1i}+1 \ \cdots \atop \cdots \ 0 \ \ \tilde{\gamma}_{2i} \ \ \cdots}\big)}$ and $\phi\left( y_i,G_{\tilde{\gamma}}\right) =G_{ \big( {\cdots \ 0 \ \ \tilde{\gamma}_{1i} \ \ \cdots \atop \cdots \ 0 \ \tilde{\gamma}_{2i}+1 \ \cdots}\big)}$. In the case of rule (4.3), we choose $\phi\left( y_i,G_{\tilde{\gamma}}\right) =\phi\left( y_i,G_{ \big( {\cdots \ 0 \ \ a \ \ \cdots \atop \cdots \ 0 \ \tilde{\gamma}_{2i} \ \cdots}\big)}\right) =G_{ \big( {\cdots \ 0 \ 0 \ \ a \ \ \cdots \atop \cdots \ 0 \ 1 \ \tilde{\gamma}_{2i} \ \cdots}\big)}$. In the other words, at each step of the expansion, we choose the lexicographically smallest $\tilde{\eta}$ such that $G_{\tilde{\eta}}$ appears as a term in the expansion.

\begin{Lemma}
The above process of choosing is invertible, i.e. $\phi$ is injective.
\end{Lemma}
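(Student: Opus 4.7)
My plan is to construct an explicit inverse $\psi$ on the image of $\phi$. Each of the three reduction rules produces $\tilde\eta$ with a distinctive local pattern at some position $i$ matching the index of the variable $z\in\{x_i,y_i\}$. Specifically, rule (4.1) yields $\tilde\eta_i=\big( {0 \atop 0} \big)$ with $\tilde\eta_{1(i+1)}\geq 1$; rule (4.2) yields $\tilde\eta_i=\big( {0 \atop 0} \big)$ with $\tilde\eta_{i+1}=\big( {0 \atop b} \big)$ for some $b\geq 1$; and rule (4.3) yields $\tilde\eta_i=\big( {0 \atop 0} \big)$, $\tilde\eta_{i+1}=\big( {0 \atop 1} \big)$, and $\tilde\eta_{1(i+2)}\geq 1$.

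Given $G_{\tilde\eta}$ in the image, I would take $i$ to be the smallest index at which $\tilde\eta_i=\big( {0 \atop 0} \big)$, branch on the columns $\tilde\eta_{i+1}$ and (when needed) $\tilde\eta_{i+2}$ to determine the applicable rule, and then define $\psi(G_{\tilde\eta})$ as the pair $(z, G_{\tilde\gamma})$ obtained by reversing the corresponding column insertion and arithmetic adjustment: delete the inserted $\big( {0 \atop 0} \big)$ column (and the $\big( {0 \atop 1} \big)$ column in the rule (4.3) case), and decrement the appropriate entry. Once this is set up, the identity $\psi\circ\phi=\mathrm{id}$ should follow by a direct case-by-case verification against the formulas (4.1)--(4.3).

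The main obstacle I anticipate is the overlap between rule (4.2) applied in the sub-case $\tilde\gamma_{2i}=0$ and rule (4.3), both of which can a priori produce $\tilde\eta$ with the local pattern $\big( {0 \atop 0} \big),\big( {0 \atop 1} \big),(\geq 1,*)$ at position $i$. To resolve this I would invoke the canonical lexicographically-smallest convention built into the definition of $\phi$: the two candidate preimages differ in that one has $\tilde\gamma_i=\big( {0 \atop 0} \big)$ while the other has $\tilde\gamma_{1i}\geq 1$, and I would argue that the former is not a $\tilde\gamma$ that can arise as a leading $G$ chosen by $\phi$ at any prior reduction step, so only the latter is a valid preimage and $\psi$ is well-defined.

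Once the disambiguation is settled, the rest of the argument is mechanical: in each of the three rules, the recipe for $\phi$ is a simple insertion-plus-increment on $\tilde\gamma$, and $\psi$ is the corresponding deletion-plus-decrement, so $\psi(\phi(z,G_{\tilde\gamma}))=(z,G_{\tilde\gamma})$ can be read off column by column from the formulas.
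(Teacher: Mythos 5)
Your plan takes $i$ to be the \emph{smallest} index with $\tilde\eta_i=\big( {0 \atop 0} \big)$; the correct choice, and the paper's, is the \emph{largest} such index with $i<\ell(\tilde\eta)$. The point is that $\phi$ is one-to-one only on the pairs $(z,G_{\tilde\gamma})$ that actually occur in the ordered reduction of $X^{\tilde\alpha}F_\beta$ (multiply by $y_1$ first, then $x_1$, then $y_2$, $x_2$, and so on). In that setting each step $\phi(x_i,\cdot)$ or $\phi(y_i,\cdot)$ inserts a fresh $\big( {0 \atop 0} \big)$ column at position $i$, leaves positions $<i$ untouched, and only ever pushes non-zero columns into positions $>i$; so the newly inserted column is always the \emph{rightmost} $\big( {0 \atop 0} \big)$ column before the end. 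Any $\big( {0 \atop 0} \big)$ columns further left are leftovers of earlier, lower-index steps, and $\phi^{-1}$ must leave them alone.

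Choosing the smallest position breaks in two ways. The recipe can be undefined: $\tilde\eta=\big( {0 \ 0 \ 3 \atop 0 \ 0 \ 0} \big)=\phi(x_2,G_{\big( {0 \ 2 \atop 0 \ 0} \big)})$, which occurs when expanding $x_1x_2F_{\big( {1 \atop 0} \big)}$, has its smallest $\big( {0 \atop 0} \big)$ at $i=1$, but $\tilde\eta_2=\big( {0 \atop 0} \big)$ matches none of your three local patterns. It can also be defined but wrong: $\tilde\eta=\big( {0 \ 2 \ 0 \ 2 \atop 0 \ 0 \ 0 \ 0} \big)=\phi(x_3,G_{\big( {0 \ 2 \ 1 \atop 0 \ 0 \ 0} \big)})$, arising from $x_1x_3F_{\big( {1 \ 1 \atop 0 \ 0} \big)}$, but your recipe applied at $i=1$ returns the different pair $x_1G_{\big( {1 \ 0 \ 2 \atop 0 \ 0 \ 0} \big)}$. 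That is indeed \emph{a} $\phi$-preimage of $\tilde\eta$ (which incidentally shows $\phi$ is not injective on the whole domain $(\{x_i\}\times\{G_{\tilde\gamma}\})\cup(\{y_i\}\times\{G_{\tilde\gamma}\})$), but it is not the preimage produced in the ordered reduction, since a $\big( {0 \atop 0} \big)$ at position $2$ cannot be present before one has multiplied by $x_1$. So the lemma has to be read, as the paper's proof implicitly does, as injectivity of $\phi$ on the pairs generated by the reduction process, and the inverse must pick out the last step applied, i.e.\ the largest $i$. Your resolution of the (4.2)/(4.3) overlap is sound and essentially matches the paper's cases (2) and (3), but it only becomes relevant once the choice of $i$ is corrected.
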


\begin{proof}
Since each time we multiply $x_i$ or $y_i$, the chosen term contains a $\binom00$ at position $i$. Combining this fact with the rule that we have to multiply $y_i$ before $x_i$, we have the following inverse function.

Let $i$ be the largest number that $\left( \tilde{\gamma}_{1i},\tilde{\gamma}_{2i}\right) =(0,0)$ and $0<i<\ell(\tilde{\gamma})$.

(1) If $\tilde{\gamma}_{1(i+1)}>0$, then $\phi^{-1}\left( G_{ \big( {\cdots \ 0 \ \tilde{\gamma}_{1(i+1)} \ \cdots \atop \cdots \ 0 \ \tilde{\gamma}_{2(i+1)} \ \cdots}\big)}\right) =x_iG_{ \big( {\cdots \ \tilde{\gamma}_{1(i+1)}-1 \ \cdots \atop \cdots \ \ \tilde{\gamma}_{2(i+1)} \ \ \cdots}\big)}$.

(2) If $\tilde{\gamma}_{1(i+1)}=0$ and, $\tilde{\gamma}_{1(i+2)}=0$ or $\tilde{\gamma}_{2(i+1)}>1$, then

$\phi^{-1}\left( G_{ \big( {\cdots \ 0 \ \tilde{\gamma}_{1(i+1)} \ \cdots \atop \cdots \ 0 \ \tilde{\gamma}_{2(i+1)} \ \cdots}\big)}\right) =y_iG_{ \big( {\cdots \ \ \tilde{\gamma}_{1(i+1)} \ \ \cdots \atop \cdots \ \tilde{\gamma}_{2(i+1)}-1 \ \cdots}\big)}$.

(3) If $\tilde{\gamma}_{1(i+1)}=0$, $\tilde{\gamma}_{2(i+1)}=1$ and $\tilde{\gamma}_{1(i+2)}>0$, then

$\phi^{-1}\left( G_{ \big( {\cdots \ 0 \ 0 \ \tilde{\gamma}_{1(i+2)} \ \cdots \atop \cdots \ 0 \ 1 \ \tilde{\gamma}_{2(i+2)} \ \cdots}\big)}\right) =y_iG_{ \big( {\cdots \ \tilde{\gamma}_{1(i+2)} \ \cdots \atop \cdots \ \tilde{\gamma}_{2(i+2)} \ \cdots}\big)}$.
\end{proof}

Then, we can construct a map $\Phi:\{X^{\tilde{\alpha}}F_{\beta}:|\beta|\geq 1\}\to\{G_{\tilde{\gamma}}\}$ that is defined by ``composing" $\phi$ with itself $(|\tilde{\alpha}|-1)$ times. By the above Lemma, we also have $\Phi$ is injective. For simplicity, we define $\phi^{-1}(G_{\tilde{\gamma}})$ (or $\Phi^{-1}(G_{\tilde{\gamma}})$) to be $X^{\tilde{\alpha}}G_{\tilde{\beta}}$ (or $X^{\tilde{\alpha}}F_{\beta}$) if $\phi(X^{\tilde{\alpha}}G_{\tilde{\beta}})=G_{\tilde{\gamma}}$ (or $\Phi(X^{\tilde{\alpha}}F_{\beta})=G_{\tilde{\gamma}}$ respectively).

\begin{Lemma}
In the expansion of $X^{\tilde{\alpha}}F_{\beta}$ in the $G$ basis using the rules above, the term $\Phi(X^{\tilde{\alpha}}F_{\beta})$ appears only once. In particular, it has coefficients $1$ or $-1$.
\end{Lemma}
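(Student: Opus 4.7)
The plan is to prove the lemma by induction on $|\tilde{\alpha}|$. The base case $|\tilde{\alpha}|=0$ is immediate, since $X^{\binom{0}{0}}F_{\beta}=F_{\beta}=G_{\beta}$ and $\Phi(F_{\beta})=G_{\beta}$ appears with coefficient $1$.

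For the inductive step, I would factor $X^{\tilde{\alpha}}F_{\beta}=z\cdot X^{\tilde{\alpha}'}F_{\beta}$, with $z\in\{x_k,y_k\}$ the outermost variable in the reordering of step (1). By the inductive hypothesis, in the expansion $X^{\tilde{\alpha}'}F_{\beta}=\sum_j c_j G_{\tilde{\gamma}_j}$, the target $G_{\tilde{\gamma}^*}=\Phi(X^{\tilde{\alpha}'}F_{\beta})$ appears exactly once with $c_*=\pm 1$. After multiplying by $z$ and expanding each $zG_{\tilde{\gamma}_j}$ via the applicable rule (4.1), (4.2), or (4.3), the target $G_{\tilde{\eta}}:=\phi(z,G_{\tilde{\gamma}^*})=\Phi(X^{\tilde{\alpha}}F_{\beta})$ arises as the chosen term of $zG_{\tilde{\gamma}^*}$, contributing coefficient $\pm 1$. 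The lemma will follow once I verify that $G_{\tilde{\eta}}$ arises nowhere else in the full expansion.

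By the injectivity of $\phi$ established in the previous lemma (which relies on the ordering constraint that $y_k$ is multiplied before $x_k$), $G_{\tilde{\eta}}$ cannot be the chosen term of $zG_{\tilde{\gamma}_j}$ for any $j\neq *$. To rule out $G_{\tilde{\eta}}$ appearing as a non-chosen term, I would carry out a structural case analysis on the rule used. The chosen target $\tilde{\eta}$ has column $\binom{0}{0}$ at position $k$, together with a specific structure at positions $k+1$ and $k+2$ (depending on whether rule (4.2) or rule (4.3) was used to form it). For rules (4.1) and (4.2), the non-chosen term simply increments an entry at position $k$, so its column at $k$ cannot be $\binom{0}{0}$. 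For rule (4.3), the analysis must trace the cascade of subsequent (4.1) and (4.2) applications.

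The hard part will be the case analysis for rule (4.3), whose expansion cascades through iterated $x_k$-multiplications on the intermediate $G$-terms. The key invariant is that each summand $x_k^m y_k G_{\tilde{\gamma}^{(m)}}$ with $m\ge 1$ carries a ``wrong'' column, namely $\binom{a-m}{\tilde{\gamma}^*_{2k}}$ with $a-m<a$ for $1\le m<a$ or $\binom{0}{\tilde{\gamma}^*_{2k}+1}$ for $m=a$, differing from the target's column $\binom{a}{\tilde{\gamma}^*_{2k}}$. The subsequent $x_k$-multiplications act at position $k$ only by incrementing or inserting columns, without altering the value of the carried column at its shifted position; hence the mismatch persists and no non-chosen branch of (4.3) can match the target.
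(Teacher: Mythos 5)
There is a genuine gap, concentrated in the rule (4.3) case. Your inductive set-up is sound, and the disposal of rules (4.1) and (4.2) (non-chosen term has non-zero column at position $k$, whereas $\tilde{\eta}$ has $\binom{0}{0}$ there) is correct and works uniformly for every $G_{\tilde{\gamma}_j}$ in the expansion of $X^{\tilde{\alpha}'}F_{\beta}$. But your treatment of rule (4.3) is carried out only for the single term $G_{\tilde{\gamma}^*}$: the ``key invariant'' is stated with $\tilde{\gamma}^*_{2k}$ throughout, and you never rule out $G_{\tilde{\eta}}$ arising as a \emph{non-chosen} branch of $y_k G_{\tilde{\gamma}_j}$ for some $\tilde{\gamma}_j\neq\tilde{\gamma}^*$ that also has $\tilde{\gamma}_{j,1k}>0$. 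Injectivity of $\phi$ only eliminates the \emph{chosen} branches for those $j$; the remaining non-chosen branches produced by the (4.3) cascade are left unaddressed, and they can produce generalized bicompositions whose first $k+1$ columns look exactly like those of $\tilde{\eta}$ (e.g.\ when $\tilde{\gamma}_{j}$ agrees with $\tilde{\gamma}^*$ everywhere except that its $k$-th column has a larger first entry, the intermediate $\gamma_j^{(m)}$ with $m=\tilde{\gamma}_{j,1k}-\tilde{\gamma}^*_{1k}$ carries the ``correct'' column $\binom{a}{b}$, and the mismatch has to be hunted down at position $k$ or $k+1$ after the cascade). Ruling these out requires a comparison argument indexed by the first column where $\tilde{\gamma}_j$ and $\tilde{\gamma}^*$ differ, which is precisely what the paper's central claim supplies and which you have not reproduced.

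In addition, the stated invariant is imprecise even for $\tilde{\gamma}^*$ itself in the $m=a$ branch. There, the carried column starts at position $k$ (not $k+1$), so the subsequent $y_k$ and $x_k$ multiplications \emph{do} alter it while it sits at $k$; one gets $\binom{p}{\,\tilde{\gamma}^*_{2k}+1}$ for varying $p\ge 0$, not the fixed $\binom{0}{\,\tilde{\gamma}^*_{2k}+1}$ you assert. The conclusion can be rescued by observing that the second entry $\tilde{\gamma}^*_{2k}+1$ never matches the target's second entry $\tilde{\gamma}^*_{2k}$, but as written the invariant is false. In contrast, the paper sidesteps all of this by proving a single claim --- for $\tilde{\mu}\neq\tilde{\nu}$, $\phi(x_iG_{\tilde{\mu}})$ (resp.\ $\phi(y_iG_{\tilde{\mu}})$) does not appear anywhere in the expansion of $x_iG_{\tilde{\nu}}$ (resp.\ $y_iG_{\tilde{\nu}}$) --- by a case split on the first column $k$ where $\tilde{\mu}$ and $\tilde{\nu}$ differ versus the variable index $i$; it then concludes by a ``first point of deviation'' argument along the sequence of choices. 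That claim is exactly the missing ingredient in your Step 2: with it, the $\tilde{\gamma}_j\neq\tilde{\gamma}^*$ case is handled in one stroke, and only the $\tilde{\gamma}_j=\tilde{\gamma}^*$ self-expansion needs the column-tracking analysis you sketch.
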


\begin{proof}
We begin with the claim that if $\tilde{\mu}\neq\tilde{\nu}$, then $\phi(x_iG_{\tilde{\mu}})$ and $\phi(y_iG_{\tilde{\mu}})$ do not appear in the expansion of $x_iG_{\tilde{\nu}}$ and $y_iG_{\tilde{\nu}}$ respectively. 

Let $k$ be the smallest integer such that $(\tilde{\mu}_{k1},\tilde{\mu}_{k2})\neq(\tilde{\nu}_{k1},\tilde{\nu}_{k2})$. In rules (4.1), (4.2) and (4.3), for all $G_{\tilde{\gamma}}$ in the expansion of $x_iG_{\tilde{\mu}}$ or $y_iG_{\tilde{\mu}}$, the first $i-1$ columns of $\tilde{\gamma}$ is the same as that of $\tilde{\mu}$. Hence, the claim follows if $k<i$.

If $k=i$, and if we are multiplying $x_i$ using rules (4.1) or (4.2), then the claim holds because either the $i$-th or the $i+1$-th columns of $x_iG_{\tilde{\mu}}$ will be different from terms in expansions of $x_iG_{\tilde{\nu}}$. If we are multiplying by $y_i$, then note that if the $i-th$ column of $\mu$ is $(0,0)$, then $\mu_{(i+1)1}$ must be $0$ because otherwise, that means we multiplied an $x_i$ or $x_j$ or $y_j$ with $j>i$ before $y_i$, which violates our rule. And the same condition applies to $\nu$. With this restriction, it is easy to check that the claim holds.

If $k>i$, in both cases, if we choose any term in the expansion that is not $\phi(x_iG_{\tilde{\nu}})$ or $\phi(y_iG_{\tilde{\nu}})$, then the $i$ or $i+1$ column of its index must be different from that of $\phi(x_iG_{\tilde{\mu}})$ or $\phi(y_iG_{\tilde{\mu}})$. If we choose $\phi(x_iG_{\tilde{\nu}})$ or $\phi(y_iG_{\tilde{\nu}})$, we also have $\phi(x_iG_{\tilde{\mu}})\neq\phi(x_iG_{\tilde{\nu}})$ and $\phi(y_iG_{\tilde{\mu}})\phi(y_iG_{\tilde{\nu}})$ because $\mu\neq\nu$.

Since each term in the expansion of $X^{\tilde{\alpha}}F_{\beta}$ corresponds to a sequence of choice using rules (4.1), (4.2) or (43), if at some point, we choose a term that is different from the choice in $\Phi$, then a recursive use of the claim asserts that $\Phi(X^{\tilde{\alpha}}F_{\beta})$ will not appear again.
\end{proof}

We now define an order $(<_G)$ on the set of generalized bicompositions as follows

\begin{enumerate}
	\item If $\tilde{\alpha}$ and $\tilde{\beta}$ are bicompositions, then $\tilde{\alpha}<_G\tilde{\beta}$ if $\tilde{\alpha}<_{lex}\tilde{\beta}$.
	
	\item If $\tilde{\alpha}$ is a bicomposition and $\tilde{\beta}$ is not, then $\tilde{\alpha}<_G\tilde{\beta}$.
	
	\item If $\tilde{\alpha}=\tilde{\mu}\big( {0 \atop 0} \big)\alpha'$, $\tilde{\beta}=\tilde{\nu}\big( {0 \atop 0} \big)\beta'$ where $\alpha'$ and $\beta'$ are bicompositions, let $u=\ell(\tilde{\alpha})-\ell(\alpha')-1$, $v=\ell(\tilde{\alpha})-\ell(\beta')-1$, then $\tilde{\alpha}<_G\tilde{\beta}$ if
	
	\begin{enumerate}
		\item $u<v$, or
		\item $u=v$, $\alpha'_{11}>0$ and $\beta'_{11}=0$, or
		\item $u=v$, $\alpha'_{11}>0$, $\beta'_{11}>0$ (or $\alpha'_{11}=0$, $\beta'_{11}=0$) and $\overleftarrow{\phi}(G_{\tilde{\alpha}})<_G\overleftarrow{\phi}(G_{\tilde{\beta}})$ where we define $\overleftarrow{\phi}(G_{\tilde{\delta}})$ to be $\tilde{\gamma}$ if $\phi(x_iG_{\tilde{\gamma}})=G_{\tilde{\delta}}$ or $\phi(y_iG_{\tilde{\gamma}})=G_{\tilde{\delta}}$ for some $i$.
	\end{enumerate}
\end{enumerate}

\begin{Lemma}\label{lem:4.3}
The order defined above is a total order on the set of generalized bicompositions such that if $G_{\tilde{\gamma}}=\Phi(X^{\tilde{\alpha}}F_{\beta})$, then for all $G_{\tilde{\delta}}$ that appears in the expansion of $X^{\tilde{\alpha}}F_{\beta}$, we have $\tilde{\gamma}\geq_G\tilde{\delta}$.
\end{Lemma}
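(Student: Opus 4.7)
My plan splits the lemma into two parts: establishing that $<_G$ is a total order, and showing that $\Phi(X^{\tilde{\alpha}} F_{\beta})$ is $<_G$-maximal among the terms appearing in the expansion. For the total order part, the only recursive clause in the definition of $<_G$ is case (3)(c)(iii), which defers to comparing $\overleftarrow{\phi}(G_{\tilde{\alpha}})$ and $\overleftarrow{\phi}(G_{\tilde{\beta}})$. The explicit formulas for $\phi^{-1}$ in the preceding lemma show that $\overleftarrow{\phi}$ strictly decreases the length of its argument, so this recursion terminates by induction on $\ell(\tilde{\alpha}) + \ell(\tilde{\beta})$. Irreflexivity and trichotomy follow from exhausting the case split (1)--(3). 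Transitivity is obtained by another induction on total length, distributing across the three top-level cases and invoking the inductive hypothesis on $\overleftarrow{\phi}$-images in the recursive case.

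For the maximality part, I would induct on $|\tilde{\alpha}|$. The base case where $\tilde{\alpha}$ is a single variable (or empty) is immediate because the expansion has a single term. For the inductive step, I peel off the outermost variable $v \in \{x_i, y_i\}$ from the reordering of step (1), so that $X^{\tilde{\alpha}} F_{\beta} = v \cdot X^{\tilde{\alpha}''} F_{\beta}$, and let $\tilde{\eta} = \Phi(X^{\tilde{\alpha}''} F_{\beta})$, which is $<_G$-maximal by the inductive hypothesis. Any term $G_{\tilde{\delta}}$ in the expansion of $X^{\tilde{\alpha}} F_{\beta}$ arises from expanding some $v G_{\tilde{\mu}}$ where $G_{\tilde{\mu}}$ occurs in the expansion of $X^{\tilde{\alpha}''} F_{\beta}$. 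The argument then reduces to two sub-claims: (A) $\phi(v, G_{\tilde{\mu}})$ is the $<_G$-max of the expansion of $v G_{\tilde{\mu}}$; (B) $\tilde{\mu} \mapsto \phi(v, G_{\tilde{\mu}})$ is monotone in $<_G$. Granting both, the chain $\tilde{\delta} \leq_G \phi(v, G_{\tilde{\mu}}) \leq_G \phi(v, G_{\tilde{\eta}}) = \Phi(X^{\tilde{\alpha}} F_{\beta})$ closes the induction.

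Sub-claim (A) is a direct verification from the rules (4.1)--(4.3): by construction, $\phi(v, G_{\tilde{\mu}})$ is the unique term in the expansion whose last $\binom{0}{0}$ column lies at position $i$ (where $v = x_i$ or $y_i$), while every other term either has its last $\binom{0}{0}$ at a strictly smaller position or is a bicomposition. Case (3)(a) of the order definition then identifies this term as the one with the largest $u$-value, giving maximality. Sub-claim (B) exploits the recursive nature of case (3)(c)(iii): both $\phi(v, G_{\tilde{\mu}_j})$ share $u = i - 1$, and by construction $\overleftarrow{\phi}$ returns $\tilde{\mu}_j$. When both images fall into the same sub-case of (3)(c), the comparison reduces via (3)(c)(iii) to $\tilde{\mu}_1 <_G \tilde{\mu}_2$, which is the hypothesis; when they trigger different rules (for instance (4.2) versus (4.3) with $v = y_i$), the comparison is resolved by inspecting the value of $\alpha'_{11}$ in each output and applying (3)(c)(ii).

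The step I expect to be the main obstacle is the case analysis for sub-claim (B), particularly the subtleties introduced by rule (4.3): it inserts two new columns through a nested $\phi$ application, and aligning the conditions of (3)(c)(iii) --- the equality of $u$-values, the agreement of $\alpha'_{11}$ classes, and the well-founded recursion on $\overleftarrow{\phi}$-images --- with the precise shape of each rule's output requires careful bookkeeping. No new ideas beyond the machinery already in place should be needed, however.
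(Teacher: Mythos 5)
Your strategy is structurally different from the paper's. The paper fixes $\tilde{\gamma}$ and, for an arbitrary $\tilde{\beta} >_G \tilde{\gamma}$, argues directly that $\tilde{\beta}$ cannot arise in the expansion of $\Phi^{-1}(G_{\tilde{\gamma}})$, splitting on which clause of $<_G$ produced the strict inequality (the paper only spells out cases (1), (2), (3a), (3b), leaving the recursive case (3c) implicit). You instead run a forward induction on $|\tilde{\alpha}|$ via a peel-off, reducing the claim to a one-step maximality fact (A) and a monotonicity fact (B). That is a legitimate alternative framework, and arguably more explicit about the recursion than the paper's own terse argument.

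There is, however, a concrete error in your sub-claim (A). You assert that $\phi(v, G_{\tilde{\mu}})$ is the \emph{unique} term whose last $\binom{0}{0}$ column sits at position $i$, so that clause (3a) alone settles maximality. This is false: the paper's own worked example $y_1 F_{\left(1 \atop 0\right)}$ already refutes it. In that expansion, the $\phi$-choice $G_{\left(0\ 0\ 1 \atop 0\ 1\ 0\right)}$ and also $G_{\left(0\ 1 \atop 0\ 1\right)}$ and $G_{\left(0\ 1\ 0 \atop 0\ 0\ 1\right)}$ all have their last $(0,0)$ at position $1$, so their $u$-values coincide and (3a) cannot separate them; you must additionally invoke (3b), using that the rule-(4.3) choice is the one with $\alpha'_{11}=0$ while the competitors at the same $u$ have $\alpha'_{11}>0$. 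A related issue infects your sketch of (B): the premise that ``both $\phi(v, G_{\tilde{\mu}_j})$ share $u = i-1$'' is also not literally true, since if $\tilde{\mu}$ already contains $(0,0)$ columns at or beyond position $i$ (which happens once several insertions accumulate), the new insertion at $i$ shifts them right and the resulting last $(0,0)$ lies strictly beyond $i$. What you actually need in both (A) and (B) is a relative statement about how $\phi(v,\cdot)$ transforms the $u$-value and the $\alpha'_{11}$-class, and that is precisely the bookkeeping you flag as the obstacle. Until those two points are repaired, the chain $\tilde{\delta} \leq_G \phi(v, G_{\tilde{\mu}}) \leq_G \phi(v, G_{\tilde{\eta}})$ is not secured.
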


\begin{proof}
Clearly this is a total order. If $\tilde{\alpha}<\tilde{\beta}$ by (1) or (2), then $\tilde{\beta}$ cannot appear in the expansion of $\Phi^{-1}(\tilde{\alpha})=\tilde{\alpha}$.

If $\tilde{\alpha}<\tilde{\beta}$ by (3a), that means $\phi^{-1}(\tilde{\alpha})=x_{u+1}G_{\tilde{\gamma}}$ or $y_{u+1}G_{\tilde{\gamma}}$ for some $\tilde{\gamma}$. Hence, $\tilde{\beta}$ cannot appear in the expansion of $\Phi^{-1}(\tilde{\alpha})$ because $\tilde{\beta}_{(v+1)1}=\tilde{\beta}_{(v+1)2}=0$ cannot be created.

If $\tilde{\alpha}<\tilde{\beta}$ by (3b), that means $\phi^{-1}(\tilde{\alpha})=x_{u+1}G_{\tilde{\gamma}}$ for some $\tilde{\gamma}$. Hence, $\tilde{\beta}$ cannot appear in the expansion of $\Phi^{-1}(\tilde{\alpha})$ because it is not in that of $x_{u+1}G_{\tilde{\delta}}$ for any $\tilde{\delta}$.
\end{proof}

With this ordering, there is a unique leading $G_{\tilde{\delta}}$ for each expansion of $X^{\tilde{\alpha}}F_{\beta}$.

\begin{Theorem}\label{thm:4.4}
The set $A=\{G_{\tilde{\alpha}} \mid G_{\tilde{\alpha}}\notin \text{Img}(\Phi)\}$ forms a Hilbert basis for the quotient space R/I.
\end{Theorem}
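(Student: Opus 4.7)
My plan is to verify that $A$ is a Hilbert basis of $R/I$ by establishing both spanning modulo $I$ and linear independence modulo $I$, using the triangular structure from $\Phi$ together with Lemma \ref{lem:4.3}. The key ingredient is that for each spanning element $X^{\tilde{\alpha}}F_{\beta}$ of $I$ (with $|\beta| \geq 1$), the $G$-expansion of $X^{\tilde{\alpha}}F_{\beta}$ has $G_{\Phi(X^{\tilde{\alpha}}F_{\beta})}$ as its $<_G$-leading term with coefficient $\pm 1$, and by injectivity of $\Phi$ these leading terms are distinct across different pairs $(\tilde{\alpha},\beta)$.

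For spanning, I argue by well-founded induction on $<_G$ (within each homogeneous degree) that every $G_{\tilde{\gamma}}$ is congruent modulo $I$ to a $\mathbb{Q}$-linear combination of elements of $A$. If $G_{\tilde{\gamma}} \in A$, there is nothing to show. Otherwise $\tilde{\gamma} = \Phi(X^{\tilde{\alpha}}F_{\beta})$ for some $X^{\tilde{\alpha}}F_{\beta} \in I$, and Lemma \ref{lem:4.3} gives $X^{\tilde{\alpha}}F_{\beta} = \pm G_{\tilde{\gamma}} + \sum_{\tilde{\delta} <_G \tilde{\gamma}} c_{\tilde{\delta}} G_{\tilde{\delta}}$, whence $G_{\tilde{\gamma}} \equiv \mp \sum_{\tilde{\delta} <_G \tilde{\gamma}} c_{\tilde{\delta}} G_{\tilde{\delta}} \pmod{I}$, and the inductive hypothesis finishes the step. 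The well-foundedness of $<_G$ on descending chains in a fixed degree follows from its recursive definition: rules (3a) and (3b) each allow only finitely many transitions in a descending chain, and rule (3c) reduces to $\overleftarrow{\phi}$-images of strictly smaller length, permitting an induction on length.

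For linear independence, suppose $f = \sum_{G_{\tilde{\alpha}} \in A} c_{\tilde{\alpha}} G_{\tilde{\alpha}}$ lies in $I$, so that $f = \sum_j d_j X^{\tilde{\mu}_j}F_{\beta_j}$. Working in a fixed homogeneous degree, I choose $\tilde{\rho}^{*} = \Phi(X^{\tilde{\mu}_{j^{*}}}F_{\beta_{j^{*}}})$ to be $<_G$-maximal among $\{\Phi(X^{\tilde{\mu}_j}F_{\beta_j}) : d_j \neq 0\}$. By Lemma \ref{lem:4.3} and injectivity of $\Phi$, the term $G_{\tilde{\rho}^{*}}$ appears in the $G$-expansion of $X^{\tilde{\mu}_{j^{*}}}F_{\beta_{j^{*}}}$ with coefficient $\pm 1$ and does not appear in the expansion of any other $X^{\tilde{\mu}_i}F_{\beta_i}$ with $d_i \neq 0$; such $i$ would force $\Phi(X^{\tilde{\mu}_i}F_{\beta_i}) \geq_G \tilde{\rho}^{*}$, and maximality plus injectivity of $\Phi$ would give $i = j^{*}$. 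Hence the $G_{\tilde{\rho}^{*}}$-coefficient on the right-hand side equals $\pm d_{j^{*}} \neq 0$, while on the left-hand side it is $0$ since $\tilde{\rho}^{*} \in \text{Img}(\Phi)$ means $G_{\tilde{\rho}^{*}} \notin A$. This contradiction forces all $d_j = 0$, whence $f = 0$ and all $c_{\tilde{\alpha}} = 0$.

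The main obstacle lies in the infinitary setting: since $I$ is the closure of the ideal generated by $\DQsym^+$ in the formal power series ring $R$, the sum $\sum_j d_j X^{\tilde{\mu}_j}F_{\beta_j}$ may be genuinely infinite, and $<_G$ admits infinite ascending chains in a fixed degree (for instance $\big( {0 \atop 0} \big)^k \alpha'$ for fixed bicomposition $\alpha'$ as $k$ varies), so the naive $<_G$-maximum used above requires justification. The resolution is that each $G_{\tilde{\rho}}$ is a finite combination of monomials and formal-power-series convergence forces each monomial of $f$ to receive contributions from only finitely many $j$'s; thus the indices actually contributing to the $G_{\tilde{\rho}^{*}}$-coefficient form a finite set, within which a maximum exists. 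Combined with the well-foundedness of $<_G$ on descending chains, this ensures both the spanning induction and the linear-independence leading-term extraction go through rigorously.
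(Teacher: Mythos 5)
Your spanning argument mirrors the paper's reduction procedure (repeatedly subtracting $\Phi^{-1}(G_{\tilde{\alpha}})\in I$ to cancel terms lying in $\text{Img}(\Phi)$), recast as a well-founded induction on $<_G$ within each homogeneous degree; this is essentially the paper's approach, and your verification that $<_G$ has no infinite descending chains in a fixed degree (stabilize $u$, then the $\alpha'_{11}$ sign, then induct on degree through $\overleftarrow{\phi}$) is a reasonable elaboration of what the paper leaves implicit. Since the paper's stated proof addresses only spanning, your attempt to also establish linear independence is sensible under a ``genuine basis'' reading of ``Hilbert basis.''

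The linear independence half, however, contains a concrete error. You correctly flag the infinitary obstacle -- within a fixed degree $<_G$ has infinite ascending chains, so a $<_G$-maximum over $\{\Phi(X^{\tilde{\mu}_j}F_{\beta_j}):d_j\neq 0\}$ need not exist a priori -- but your proposed resolution rests on the claim that ``each $G_{\tilde{\rho}}$ is a finite combination of monomials.'' This is false. With infinitely many variables, $G_{\tilde{\gamma}}=F_{\tilde{\gamma}}=\sum_{\tilde{\gamma}\preceq\beta}M_\beta$ whenever $\tilde{\gamma}$ is a nonzero bicomposition, and each $M_\beta$ is itself an infinite sum of monomials over $i_1<\cdots<i_k$; nonzero $G_{\tilde{\rho}}$ are never finite monomial sums. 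Consequently, the inference from ``each monomial coefficient of $f$ receives only finitely many contributions'' to ``only finitely many $j$ contribute to the $G_{\tilde{\rho}^*}$-coefficient'' has no support. The argument is moreover circular: $\tilde{\rho}^*$ is defined as a maximum before you establish that the relevant index set is finite, yet that finiteness is exactly what was meant to justify the maximum's existence. The finiteness that actually follows from rules (4.1)--(4.3) is the transposed one -- each $X^{\tilde{\mu}}F_\beta$ has a \emph{finite} $G$-expansion, since only finitely many rule applications occur and each produces finitely many terms -- but this alone does not close the gap; you would still need to control which pairs $(\tilde{\mu},\beta)$ can produce a given $G_{\tilde{\rho}}$ in their expansion so as to extract a well-defined leading contributor from a possibly infinite sum $\sum_j d_j X^{\tilde{\mu}_j}F_{\beta_j}$. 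As written, the independence portion of your proof does not go through.
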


\begin{proof}
For any polynomial $p$ in $R$, we write $p$ in terms of the $G$ basis with $<_G$ order. For each term $G_{\tilde{\alpha}}\in Img(\Phi)$, we subtract $p$ by $\Phi^{-1}(G_{\tilde{\alpha}})\in I$ and $G_{\tilde{\alpha}}$ is cancelled. If we repeat this process (possibly countably many times), we can express $p$ as a series of $A$.
\end{proof}

\section{Finitely many variables case}

In the case that there are finitely many variables, $R_n=\mathbb{Q}[x_1,\dots,x_n,y_1,\dots,y_n]$, the above constructions of $\DQsym(x_1,\dots,x_n,y_1,\dots,y_n)$, the $F$, $G$ bases and the ideal $I_n=<\DQsym^+(x_1,\dots,x_n,y_1,\dots,y_n)>$ remain the same by taking $x_i=y_i=0$ for $i>n$. In this case, $LM(G_{\tilde{\alpha}})=X^{\tilde{\alpha}}$ whenever $\ell(\tilde{\alpha})\leq n$ and hence $\{G_{\tilde{\alpha}}:\ell(\tilde{\alpha})\leq n\}$ spans $R_n$.

Let $R_n^{i,j}$ be the span of $\{X^{\tilde{\alpha}}:\ell(\tilde{\alpha})\leq n,\sum_k\tilde{\alpha}_{1k}=i, \sum_k\tilde{\alpha}_{2k}=j\}$. Since $I_n$ is bihomogeneous in $\mathbf{x}$ and $\mathbf{y}$, $\displaystyle I_n=\bigoplus_{i,j}I_n^{i,j}$ where $I_n^{i,j}=I_n\cap R_n^{i,j}$, and $\displaystyle R_n/I_n=\bigoplus_{i,j}V_n^{i,j}$ where $V_n^{i,j}=R_n/I_n\cap R_n^{i,j}$.

The Hilbert matrix corresponding to $R_n/I_n$ is the matrix $M_n(i,j)=\dim(V_n^{i-1,j-1})$.

The goal of this section is to compute the second column of the Hilbert matrix. The proof is slight generalization of the one in \cite{ABB}.

\begin{Lemma}\label{lem:5.1}
The set $\{G_{\tilde{\alpha}} \mid G_{\tilde{\alpha}}\notin Img(\Phi), \ell(\tilde{\alpha})\leq n\}$ spans the quotient $R_n/I_n$.
\end{Lemma}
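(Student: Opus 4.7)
The plan is to adapt the proof of Theorem \ref{thm:4.4} to the finite setting, keeping track that all generalized bicompositions that appear stay of length $\leq n$. The first paragraph of this section already records that $\{G_{\tilde{\alpha}}:\ell(\tilde{\alpha})\leq n\}$ spans $R_n$, so the task reduces to showing: for every $\tilde{\alpha}$ with $\ell(\tilde{\alpha})\leq n$ and $G_{\tilde{\alpha}}\in\text{Img}(\Phi)$, the element $G_{\tilde{\alpha}}$ is equivalent modulo $I_n$ to a linear combination of $\{G_{\tilde{\eta}}:G_{\tilde{\eta}}\notin\text{Img}(\Phi),\,\ell(\tilde{\eta})\leq n\}$. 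I will do this by $<_G$-induction inside each fixed bidegree, which contains only finitely many length-$\leq n$ generalized bicompositions.

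The argument rests on two length estimates that I need to extract from the construction of $\Phi$. First, each step of $\phi$ applied to $x_iG_{\tilde{\delta}}$ or $y_iG_{\tilde{\delta}}$ produces a leading term of length at least $\max(\ell(\tilde{\delta})+1,\,i+1)$, and the largest index $i$ used in the reduction of $X^{\tilde{\gamma}}F_{\beta}$ equals $\ell(\tilde{\gamma})$; iterating gives $\ell(\Phi(X^{\tilde{\gamma}}F_{\beta}))\geq\ell(\tilde{\gamma})+1$ and $\geq\ell(\beta)$. Consequently, whenever $\Phi(X^{\tilde{\gamma}}F_{\beta})=G_{\tilde{\alpha}}$ with $\ell(\tilde{\alpha})\leq n$, the monomial $X^{\tilde{\gamma}}$ already lies in $R_n$, and $F_{\beta}|_{R_n}\in\DQsym_n^+$ since $|\beta|\geq 1$; hence $(X^{\tilde{\gamma}}F_{\beta})|_{R_n}\in I_n$. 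Second, within each of the rules (4.1)--(4.3) every non-leading term has length at most that of the leading one, so by induction on the number of variable multiplications every $G_{\tilde{\eta}}$ appearing in the full expansion of $X^{\tilde{\gamma}}F_{\beta}$ satisfies $\ell(\tilde{\eta})\leq\ell(\tilde{\alpha})\leq n$.

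With both estimates in hand, the inductive step becomes routine: fix $\tilde{\alpha}$ with $G_{\tilde{\alpha}}\in\text{Img}(\Phi)$ and $\ell(\tilde{\alpha})\leq n$, set $\Phi^{-1}(G_{\tilde{\alpha}})=X^{\tilde{\gamma}}F_{\beta}$, and invoke Lemma \ref{lem:4.3} to obtain
\[
(X^{\tilde{\gamma}}F_{\beta})|_{R_n}\;=\;G_{\tilde{\alpha}}+\sum_{\substack{\tilde{\eta}<_G\tilde{\alpha}\\ \ell(\tilde{\eta})\leq n}}e_{\tilde{\eta}}\,G_{\tilde{\eta}}\;\in\; I_n.
\]
Thus $G_{\tilde{\alpha}}\equiv-\sum e_{\tilde{\eta}}G_{\tilde{\eta}}\pmod{I_n}$, and each summand either already lies in $A$ or, being $<_G$-smaller, of length $\leq n$, and in $\text{Img}(\Phi)$, is handled by the induction hypothesis. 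Combined with the spanning of $R_n$ by the length-$\leq n$ $G$-basis, this finishes the proof.

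I expect the main obstacle to be verifying the two length estimates uniformly across the three reduction rules --- especially the non-leading-term bound in rule (4.3), which is itself an iterated application of (4.1) and (4.2). Once that bookkeeping is in place, the rest of the proof is a direct transcription of Theorem \ref{thm:4.4} into the finite setting.
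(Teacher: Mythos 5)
Your overall strategy is the same as the paper's: take a $<_G$-minimal $\tilde{\beta}$ that cannot be reduced, subtract $\Phi^{-1}(G_{\tilde{\beta}})\in I_n$, and observe that all remaining $G$-terms are $<_G\tilde{\beta}$ by Lemma~\ref{lem:4.3}. You correctly noticed that this argument needs length control that the paper's own proof passes over in silence: one must know both that $\Phi^{-1}(G_{\tilde{\beta}})=X^{\tilde{\gamma}}F_{\beta'}$ has $\ell(\tilde{\gamma})\le n$ (so that its image in $R_n$ actually lies in $I_n$) and that the terms $G_{\tilde{\delta}}$ appearing in its $G$-expansion have $\ell(\tilde{\delta})\le n$ (so that restriction to $R_n$ does not create new, possibly $\ge_G\tilde{\beta}$, length-$\le n$ terms). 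Your first estimate, giving $\ell(\tilde{\gamma})<n$, is fine.

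The second estimate, however, is false, and this is exactly the place you flagged as needing verification. It is not true that within rule~(4.3) every non-leading term has length at most that of the $\phi$-choice. Take $\beta=\big({2\atop 0}\big)$ and multiply by $y_1$. Then $\Phi(y_1F_{\beta})=\phi\big(y_1,G_{\big({2\atop 0}\big)}\big)=G_{\big({0\ 0\ 2\atop 0\ 1\ 0}\big)}$, which has length $3$. But the $k=1$ term of rule~(4.3) contributes $x_1\big(y_1G_{\big({0\ 1\atop 0\ 0}\big)}\big)$; applying (4.2) and then (4.1) to it produces, among others, the term $G_{\big({0\ 1\ 0\ 1\atop 0\ 0\ 1\ 0}\big)}$ of length $4$ (and the $k=2$ term similarly yields $G_{\big({0\ 1\ 1\ 0\atop 0\ 0\ 0\ 1}\big)}$ of length $4$, so this is not an isolated cancellation). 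Thus the expansion of $\Phi^{-1}(G_{\tilde{\beta}})$ can contain $G_{\tilde{\delta}}$ with $\ell(\tilde{\delta})>\ell(\tilde{\beta})$, and in particular with $\ell(\tilde{\delta})>n$. The induction ``every term in the full expansion satisfies $\ell(\tilde{\eta})\le\ell(\tilde{\alpha})$'' therefore fails already at a single application of $\phi$, and the proposed proof does not close. (The paper's proof simply asserts that the $<_G$-smaller terms ``reduce to $0$'' without addressing the terms of length $>n$; so you have put your finger on a real soft spot, but the fix you supply does not work as stated. One would need either a different bound, or an argument that the restriction to $R_n$ of the length-$>n$ terms can still be absorbed, e.g.\ by showing they lie in $I_n$ or re-expand only into strictly $<_G$-smaller length-$\le n$ basis elements.)
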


\begin{proof}
Among all $\tilde{\alpha}$ such that $G_{\tilde{\alpha}}\in Img(\Phi)$, $\ell(\tilde{\alpha})\leq n$ and $G_{\tilde{\alpha}}$ cannot be reduced to $0$, let $\tilde{\beta}$ be the smallest one with respect to the $<_G$ order. Then, 
\begin{align*}
G_{\tilde{\beta}}&=G_{\tilde{\beta}}-\Phi^{-1}(G_{\tilde{\beta}})+\Phi^{-1}(G_{\tilde{\beta}})\\
&\equiv G_{\tilde{\beta}}-\Phi^{-1}(G_{\tilde{\beta}}) \mod I_n
\end{align*}
But since $G_{\tilde{\beta}}$ is the leading term in $\Phi^{-1}(G_{\tilde{\beta}})$, terms in $G_{\tilde{\beta}}-\Phi^{-1}(G_{\tilde{\beta}})$ are strictly smaller than $G_{\tilde{\beta}}$, and thus they reduce to $0$. This contradicts to our assumption on $\tilde{\beta}$.
\end{proof}

Let $B_n$ be the set of generalized bicompositions $\{\tilde{\alpha}\}$ such that $\displaystyle\sum_{i=1}^{k}(\tilde{\alpha}_{1i}+\tilde{\alpha}_{2i})<k$ for all $1\leq k\leq n$ and $\ell(\tilde{\alpha})\leq n$. Clearly from the definition of $G$ basis, if $\tilde{\alpha}\notin B_n$, then $G_{\tilde{\alpha}}\in I_n$. Therefore, the set $\{X^{\tilde{\alpha}}:\tilde{\alpha}\in B_n\}$ spans $R_n/I_n$, the proof is essentially the same as Lemma \ref{lem:5.1}. In particular, $X^{\tilde{\alpha}}\in I_n$ for all $|\tilde{\alpha}|\geq n$.

\begin{Lemma}\label{lem:5.2}
The set $\{X^{\tilde{\alpha}}F_{\beta}:\tilde{\alpha}\in B_n,|\beta|\geq 0\}$ spans $R_n$.
\end{Lemma}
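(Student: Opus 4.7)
The plan is to prove by induction on $|\tilde{\gamma}|$ that $X^{\tilde{\gamma}}F_{\beta}$ lies in $V:=\operatorname{span}\{X^{\tilde{\alpha}}F_{\beta'}:\tilde{\alpha}\in B_n,\,|\beta'|\geq 0\}$ for every generalized bicomposition $\tilde{\gamma}$ with $\ell(\tilde{\gamma})\leq n$ and every bicomposition $\beta$. Taking $\beta=\big({0\atop 0}\big)$ then places every monomial $X^{\tilde{\gamma}}$ in $V$, and since these monomials span $R_n$, the lemma will follow. The base case $|\tilde{\gamma}|=0$ is trivial: $X^{\tilde{\gamma}}F_{\beta}=F_{\beta}$ is of the required form with $\tilde{\alpha}=\big({0\atop 0}\big)\in B_n$.

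For the inductive step I invoke the observation recorded just before the lemma: since $G_{\tilde{\alpha}}\in I_n$ whenever $\tilde{\alpha}\notin B_n$, the set $\{X^{\tilde{\alpha}}:\tilde{\alpha}\in B_n\}$ spans $R_n/I_n$. If $\tilde{\gamma}\in B_n$ there is nothing to show; otherwise, the bihomogeneity of $I_n$ gives a decomposition
\[
X^{\tilde{\gamma}}=\sum_{i}c_i\,X^{\tilde{\alpha}_i}+r,
\]
where each $\tilde{\alpha}_i\in B_n$ has the same bidegree as $\tilde{\gamma}$ and $r\in I_n$ is bihomogeneous of that bidegree. Multiplying by $F_{\beta}$, the sum $\sum_ic_iX^{\tilde{\alpha}_i}F_{\beta}$ already belongs to $V$, and only $rF_{\beta}$ requires further work.

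To treat $rF_{\beta}$, I expand $r$ using the spanning set $\{X^{\tilde{\delta}}F_{\beta'}:|\beta'|\geq 1\}$ of $I_n$, writing $r=\sum_j d_j\,X^{\tilde{\delta}_j}F_{\beta'_j}$ in the matching bidegree. Since each $|\beta'_j|\geq 1$, we have $|\tilde{\delta}_j|=|\tilde{\gamma}|-|\beta'_j|<|\tilde{\gamma}|$. Because $\DQsym$ is an algebra, each product $F_{\beta'_j}F_{\beta}$ expands as a finite linear combination $\sum_{\gamma}e_{j\gamma}F_{\gamma}$ of $F$-basis elements, so
\[
rF_{\beta}=\sum_{j,\gamma}d_j\,e_{j\gamma}\,X^{\tilde{\delta}_j}F_{\gamma}.
\]
Every summand satisfies $|\tilde{\delta}_j|<|\tilde{\gamma}|$, so the inductive hypothesis places each $X^{\tilde{\delta}_j}F_{\gamma}$ in $V$, completing the step.

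The argument rests on two ingredients already at hand: the spanning of $R_n/I_n$ by $\{X^{\tilde{\alpha}}:\tilde{\alpha}\in B_n\}$ noted after Lemma~\ref{lem:5.1}, and the algebra structure on $\DQsym$ from \cite{ABB2}. The only delicate point I expect to be the main obstacle is ensuring the strict drop in the ``monomial part'' degree $|\tilde{\delta}|$ when $rF_{\beta}$ is re-expanded; this is forced by bihomogeneity together with $|\beta'_j|\geq 1$, so the remaining work is bookkeeping rather than new combinatorial input.
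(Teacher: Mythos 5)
Your proof is correct and follows essentially the same path as the paper's: both begin from the observation that $\{X^{\tilde{\alpha}}:\tilde{\alpha}\in B_n\}$ spans $R_n/I_n$, rewrite the remainder in terms of $X^{\tilde{\delta}}F_{\beta'}$ with $|\beta'|\geq 1$, re-expand products of $F$'s in the $F$-basis, and recurse. The paper's argument simply asserts the recursion terminates ``in a finite number of steps,'' whereas you make this termination precise by organizing the recursion as an induction on $|\tilde{\gamma}|$ and noting that $|\beta'_j|\geq 1$ forces $|\tilde{\delta}_j|<|\tilde{\gamma}|$; this is a tidier presentation of the same idea rather than a different proof.
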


\begin{proof}
We already have $\displaystyle X^{\tilde{\epsilon}}\equiv \sum_{\tilde{\alpha}\in B_n}X^{\tilde{\alpha}} \mod I_n$, which means $\displaystyle X^{\tilde{\epsilon}} = \sum_{\tilde{\alpha}\in B_n}X^{\tilde{\alpha}} + \sum_{|\beta|\geq 1}P_{\beta}F_{\beta}$ for some polynomial $P_{\beta}$. If we reduce each monomial $P_{\beta}$ using the above rule, and write the product of $F$ basis in terms of $F$ basis, the claim will be satisfied in a finite number of steps.
\end{proof}

For a generalized bicomposition $\tilde{\alpha}$ with $\ell(\tilde{\alpha})\leq n$, we define its reverse $\overline{\alpha}$ to be the generalized bicomposition such that $\overline{\alpha}_{1i}=\tilde{\alpha}_{1(n-i+1)}$ and $\overline{\alpha}_{2i}=\tilde{\alpha}_{2(n-i+1)}$ for all $1\leq i\leq n$.

We denote the set $\{X^{\tilde{\alpha}}:\overline{\alpha}\in B_n\}$ by $A_n$. The endomorphism of $R_n$ that sends $x_i$ to $x_{n-i+1}$ and $y_i$ to $y_{n-i+1}$ is clearly an algebra isomorphism that fixes $\DQsym(\mathbf{x},\mathbf{y})$, in fact, it sends $M_{\alpha}$ to $M_{\alpha'}$ where $\alpha'$ is the reversed bicomposition of $\alpha$. Therefore, by Lemma \ref{lem:5.2}, the set $\{X^{\tilde{\alpha}}F_{\beta}:\tilde{\alpha}\in A_n,|\beta|\geq 0\}$ spans $R_n$.

Hence, $I_n=\langle F_{\gamma}:|\gamma|\geq 1 \rangle$ is spanned by $\{X^{\tilde{\alpha}}F_{\beta}F_{\gamma}:\tilde{\alpha}\in A_n, |\beta|\geq0, |\gamma|\geq1\}$, which means it is spanned by $\{X^{\tilde{\alpha}}F_{\beta}:\tilde{\alpha}\in A_n, |\beta|\geq1\}$.

\begin{Lemma}\label{lem:5.3}
For $X^{\tilde{\alpha}}F_{\beta}\in R_n^{i,1}$ with $\tilde{\alpha}\in A_n$, $|\beta|\geq1$ and $|\tilde{\alpha}|+|\beta|<n$, let $G_{\tilde{\gamma}}=\Phi(X^{\tilde{\alpha}}F_{\beta})$, then $\ell(\tilde{\gamma})\leq n$.
\end{Lemma}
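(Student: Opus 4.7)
The plan is to build $\Phi(X^{\tilde{\alpha}}F_{\beta})$ step by step and track the length of the intermediate $G$-index. We start from $G_{\beta}=F_{\beta}$, whose index has length at most $|\beta|$ (since $\beta$ is a bicomposition), and apply $\phi$ once for each variable factor of $X^{\tilde{\alpha}}$ in the prescribed inside-out order $y_1^{\tilde{\alpha}_{21}},x_1^{\tilde{\alpha}_{11}},y_2^{\tilde{\alpha}_{22}},x_2^{\tilde{\alpha}_{12}},\dots$. Let $\ell_t$ denote the length of the $G$-index after $t$ applications, so the goal is $\ell_{|\tilde{\alpha}|}\le n$.

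We analyze the per-step length change by cases on which rule governs the application. Rules (\ref{eq:3.1}) and (\ref{eq:3.2}) in the form (4.1), (4.2) -- i.e.\ $x_iG_{\tilde{\gamma}}$, or $y_iG_{\tilde{\gamma}}$ with $\tilde{\gamma}_{1i}=0$ -- put $\ell_t=\max(\ell_{t-1}+1,\,i+1)$, where the second value records a ``jump'' when the variable's index $i$ exceeds the current length. Rule (4.3), which handles $y_iG_{\tilde{\gamma}}$ when $\tilde{\gamma}_{1i}>0$, inserts two new columns and so sets $\ell_t=\ell_{t-1}+2$. The structural observation that unlocks the proof is that since $X^{\tilde{\alpha}}F_{\beta}$ has total $y$-degree $1$, the whole process contains at most one $y$-multiplication, hence rule (4.3) is triggered at most once.

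Introduce the tail weights $S_k=\sum_{j\ge k}(\tilde{\alpha}_{1j}+\tilde{\alpha}_{2j})$. The condition $\tilde{\alpha}\in A_n$ translates, after reversing, into $k+S_k\le n$ for every $k\ge 1$. Unrolling the recursion for $\ell_t$ yields the bound
\[
\ell_{|\tilde{\alpha}|}\;\le\;\max\!\Bigl(\ell(\beta)+|\tilde{\alpha}|,\;\max_{k\ge 1}(k+S_k)\Bigr)+\epsilon,
\]
where $\epsilon\in\{0,1\}$ accounts for a possibly surviving $+1$ contributed by rule (4.3). Since $\ell(\beta)+|\tilde{\alpha}|\le|\beta|+|\tilde{\alpha}|\le n-1$ by hypothesis and $\max_k(k+S_k)\le n$ by $A_n$, the case $\epsilon=0$ already gives $\ell_{|\tilde{\alpha}|}\le n$.

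The main obstacle is the case where rule (4.3) does trigger, since a naive reading of the bound above gives $n+1$. I plan to resolve this by a case analysis on the relative position of $i^{**}$, the index at which rule (4.3) fires, and $i^{*}$, an index attaining $\max_k(k+S_k)$. The equality $i^{**}=i^{*}$ is impossible, since rule (4.3) requires $\ell_{i^{**}}^{-}\ge i^{**}$ while a jump at $i^{*}$ demands $\ell_{i^{*}}^{-}<i^{*}$. When $i^{**}<i^{*}$, the subsequent jump at $i^{*}$ resets the length to $i^{*}+1$, absorbing the previously accumulated $+1$. The delicate case is $i^{**}>i^{*}$: here the triggering condition $\tilde{\gamma}_{1,i^{**}}>0$ forces the $x$-content at position $i^{**}$ to have been produced either by $\beta$ or by a prior $x$-multiplication of $\tilde{\alpha}$, and combining this with the strict inequality $|\tilde{\alpha}|+|\beta|<n$ and the $A_n$-bound applied at $i^{**}$ itself provides the unit of slack needed to keep $\ell_{|\tilde{\alpha}|}\le n$.
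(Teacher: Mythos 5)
Your proposal follows the same high-level strategy as the paper: track the length of the $G$-index through the $|\tilde\alpha|$ applications of $\phi$ starting from $G_\beta$, using that rules (4.1)/(4.2) give $\ell_t=\max(\ell_{t-1}+1,i+1)$, rule (4.3) gives $\ell_t=\ell_{t-1}+2$ and fires at most once because the total $y$-degree is $1$, and that $\tilde\alpha\in A_n$ translates to $k+S_k\le n$. The paper phrases this as tracking whether the final last column originates from $\beta$ or from a "jump", but the arithmetic is the same; you are more explicit about the recursion, which is a genuine improvement in presentation.

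The gap is exactly the case you flag as delicate, $i^{**}>i^{*}$, and you never actually supply the inequality that is supposed to give "the unit of slack"; indeed no such inequality exists. Concretely take $n=4$, $\tilde\alpha=\big(\begin{smallmatrix}0&1&0\\0&0&1\end{smallmatrix}\big)$ (so $X^{\tilde\alpha}=x_2y_3$), $\beta=\big(\begin{smallmatrix}1\\0\end{smallmatrix}\big)$. Then $X^{\tilde\alpha}F_\beta\in R_4^{2,1}$, $|\tilde\alpha|+|\beta|=3<4$, and $\overline\alpha=\big(\begin{smallmatrix}0&0&1&0\\0&1&0&0\end{smallmatrix}\big)$ satisfies the Dyck condition for $B_4$, so $\tilde\alpha\in A_4$. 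Applying $\phi$ in the prescribed inside-out order: $\phi(x_2,G_{\big(\begin{smallmatrix}1\\0\end{smallmatrix}\big)})=G_{\big(\begin{smallmatrix}1&0&1\\0&0&0\end{smallmatrix}\big)}$ (a jump to length $3$), and since its third column has positive $x$-entry, the next step uses rule (4.3): $\phi(y_3,G_{\big(\begin{smallmatrix}1&0&1\\0&0&0\end{smallmatrix}\big)})=G_{\big(\begin{smallmatrix}1&0&0&0&1\\0&0&0&1&0\end{smallmatrix}\big)}$, of length $5>n$. So your delicate case does not close, and no amount of rearranging the bound $i+S_i\le n$ together with $|\tilde\alpha|+|\beta|\le n-1$ can give $\ell\le n$ here. (The paper's own Case 2 asserts that after the final jump "we use rules (4.1) and (4.2) only"; the same example shows rule (4.3) can still fire after the last jump, so the paper's proof has the identical gap that your sketch inherits.)
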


\begin{proof}
First, rules (4.1) and (4.2) increase the length by $1$ while (4.3) increase the length by $2$. Now, we need to track $\tilde{\gamma}_{\ell(\tilde{\gamma})}$. If $\tilde{\gamma}_{\ell(\tilde{\gamma})}$ comes from $\beta_{\ell(\beta)}$ and gets shifted, since we can use (4.3) at most once, we can make at most $|\tilde{\alpha}|+1$ steps to the right. Therefore, $\ell(\tilde{\gamma})\leq |\tilde{\alpha}|+1+\ell(\beta)\leq |\tilde{\alpha}|+1+|\beta|\leq n$.

If $\tilde{\gamma}_{\ell(\tilde{\gamma})}$ is $1$ which comes from multiplying $x_k$ or $y_k$ to $G_{\tilde{\epsilon}}$ with $k>\ell(\tilde{\epsilon})$, since $\tilde{\alpha}\in A_n$, we have $\sum_{i\geq k}(\tilde{\alpha}_{1i}+\tilde{\alpha}_{2i})<n-k+1$. In this process, we use rules (4.1) and (4.2) only and each increases the length by $1$. Therefore, $\tilde{\gamma}_{\ell(\tilde{\gamma})}$ can be shifted to at most position $k+n-k=n$.
\end{proof}

\begin{Corollary}
Let $M_n$ be the Hilbert matrix of $R_n/I_n$, then $M_n(n-1,2)=\displaystyle\frac{1}{n}\binom{2n-2}{n-1}$, $M_n(i,2)=\displaystyle\sum_{1\leq j\leq i,1\leq k\leq 2}M_{n-1}(j,k)$ for $1\leq i\leq n-2$, and $M_n(2,1)=0$ for $i\geq n$.
\end{Corollary}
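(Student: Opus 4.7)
The plan is to combine Theorem~\ref{thm:4.4} with Lemma~\ref{lem:5.3} to identify $\dim V_n^{i-1,1}$ with a count of generalized bicompositions, and then to evaluate that count separately in the three regimes. For $i \ge n$, every monomial $X^{\tilde\alpha}$ of bidegree $(i-1,1)$ satisfies $|\tilde\alpha| \ge n$, so $\tilde\alpha \notin B_n$ and $G_{\tilde\alpha} \in I_n$ by the remark preceding Lemma~\ref{lem:5.2}; thus $V_n^{i-1,1} = 0$, disposing of the vanishing range.

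For $1 \le i \le n-1$ the basic identity I would establish is
\[
\dim V_n^{i-1,1} \;=\; \#\bigl\{\tilde\gamma : \ell(\tilde\gamma) \le n,\ |\tilde\gamma_1|=i-1,\ |\tilde\gamma_2|=1,\ G_{\tilde\gamma} \notin \Phi\bigl(\{X^{\tilde\alpha} F_\beta : \tilde\alpha \in A_n,\ |\beta|\ge 1\}\bigr)\bigr\}.
\]
Spanning follows from Lemma~\ref{lem:5.1} (for $\ell(\tilde\gamma)\le n$ the full image of $\Phi$ coincides with its restriction to $\tilde\alpha\in A_n$ by Lemma~\ref{lem:5.3}). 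For linear independence, since the total degree is $i<n$, Lemma~\ref{lem:5.3} places every leading term $\Phi(X^{\tilde\alpha} F_\beta)$ inside $\{\ell\le n\}$, and then injectivity of $\Phi$ together with the leading-term property of Lemma~\ref{lem:4.3} guarantees that the resulting relations are linearly independent in $R_n^{i-1,1}$.

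For $i=n-1$ (the Catalan case) I would set up an explicit bijection between such $\tilde\gamma$ of bidegree $(n-2,1)$ and Dyck paths of semilength $n-1$, reading each column $(\tilde\gamma_{1k},\tilde\gamma_{2k})$ as a run of $\tilde\gamma_{1k}+\tilde\gamma_{2k}$ up-steps followed by a single down-step and using the $y$-column to distinguish one up-step; the $B_n$-style inequality $\sum_{j\le k}(\tilde\gamma_{1j}+\tilde\gamma_{2j})<k$ forces the Dyck condition and gives $\frac{1}{n}\binom{2n-2}{n-1}$. For $1\le i\le n-2$ I would peel off the first column of $\tilde\gamma$ using Lemma~\ref{lemma:3.1}: a leading $\binom{0}{0}$ shifts everything into $x_2,y_2,\dots$ and reduces to $R_{n-1}/I_{n-1}$ in the same bidegree, while a leading $\binom{1}{0}$ or $\binom{0}{1}$ factors out $x_1$ or $y_1$ and reduces to a smaller bidegree in the $(n-1)$-variable quotient. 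Bookkeeping over first-column sizes $j=1,\dots,i$ and over which variable is stripped reassembles into $\sum_{j=1}^{i}\bigl(M_{n-1}(j,1)+M_{n-1}(j,2)\bigr)$, as in the proof of~\cite{ABB}.

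The main obstacle is verifying that the stripping recursion interacts cleanly with membership in $\mathrm{Img}(\Phi)$: one must identify exactly which $\tilde\gamma$ lie in the complement, and then check that first-column stripping sends complements to complements in the $(n-1)$-variable setting. This is delicate because of the three-way case in rule (4.3) and the tie-breaking in the definition of $\phi$, so the preimages of certain $G_{\tilde\gamma}$ depend subtly on the position of the unique $y$-column relative to the internal $\binom{0}{0}$ columns. The boundary $|\tilde\alpha|+|\beta|=n-1$, where Lemma~\ref{lem:5.3} is tight and which governs precisely the Catalan case, requires especially careful handling of the $<_G$ order introduced before Lemma~\ref{lem:4.3}.
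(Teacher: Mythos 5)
Your overall framework — use Lemma~\ref{lem:5.1} for spanning, Lemma~\ref{lem:5.3} together with the leading-term argument for independence, and then count the complement of $\mathrm{Img}(\Phi)$ in each bidegree — is exactly the paper's. The $i\ge n$ vanishing argument is also correct and matches the remark preceding Lemma~\ref{lem:5.2}. The divergence, and the gap, is in the two counting steps.

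For the Catalan case $i=n-1$, the paper does not count all $\tilde\gamma\in B_n$ of bidegree $(n-2,1)$ with a distinguished up-step; it shows that the complement of $\mathrm{Img}(\Phi)$ consists precisely of those $\tilde\gamma\in B_n$ whose unique column with $\tilde\gamma_{k2}=1$ sits at the very end, $k=\ell(\tilde\gamma)$. The reason is that when the $y$-column is interior, the tail $x$-weight forces an application of rule (4.3) when running $\phi^{-1}$, which drops the length by $2$ while the size drops by $1$, and this puts $G_{\tilde\gamma}$ into $\mathrm{Img}(\Phi)$. Your proposed bijection (``$y$-column distinguishes one up-step'') implicitly allows the $y$ in any position, so it counts a different and larger set; without the $k=\ell(\tilde\gamma)$ restriction the count will not be $\frac1n\binom{2n-2}{n-1}$.

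For the recursion $1\le i\le n-2$, you propose peeling the \emph{first} column of $\tilde\gamma$ via Lemma~\ref{lemma:3.1}, whereas the paper peels the \emph{last} column, i.e., passes from $\tilde\gamma$ to its truncation $\bigl(\begin{smallmatrix}\tilde\gamma_{11}&\cdots&\tilde\gamma_{1(n-1)}\\\tilde\gamma_{21}&\cdots&\tilde\gamma_{2(n-1)}\end{smallmatrix}\bigr)$ and argues directly from the definition of $\phi$ that $G_{\tilde\gamma}\notin\mathrm{Img}(\Phi)$ iff the truncation is $\notin\mathrm{Img}(\Phi)$ in the $(n-1)$-variable setting. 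This is the natural move because $\phi$ acts by inserting $\binom00$ columns immediately to the \emph{left} of the active column, so $\Phi$-membership is invariant under deleting the tail once the degree is small enough to avoid rule (4.3). Your first-column stripping would instead have to prove that Lemma~\ref{lemma:3.1}'s decomposition is compatible with $\mathrm{Img}(\Phi)$, which is far less clear: for $\tilde\alpha\in A_n$ the large entries of $\tilde\alpha$ sit near the \emph{front}, so the preimages under $\Phi$ concentrate activity in the first column, and there is no clean correspondence between ``strip the first column'' and ``reduce to $n-1$ variables.'' You flag this as the main obstacle, and it is exactly the place where your route does not close; the paper's last-column truncation avoids it entirely.
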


\begin{proof}
Lemma \ref{lem:5.1} shows that $C_i=\{G_{\tilde{\alpha}}\in V_{n}^{i,1}:G_{\tilde{\alpha}}\notin Img(\Phi)\}$ spans $V_n^{i,1}$. Suppose there is a linear dependence $\displaystyle P=\sum_{G_{\tilde{\alpha}}\in C_i}a_{\tilde{\alpha}}G_{\tilde{\alpha}}\in I_n^{i,1}$. Since $I_n^{i,1}$ is spanned by $D=\{X^{\tilde{\alpha}}F_{\beta}\in R_n^{i,1}:\tilde{\alpha}\in A_n, |\beta|\geq1\}$, we have $P=\displaystyle\sum_{X^{\tilde{\alpha}}F_{\beta}\in D}b_{\tilde{\alpha}\beta}X^{\tilde{\alpha}}F_{\beta}$. This means the leading term of $P$ when we expand in $G$ basis is some $G_{\tilde{\gamma}}$ such that $\tilde{\gamma}\in Img(\Phi)$ and by Lemma \ref{lem:5.3} $\ell(\tilde{\gamma})\leq n$, which is absurd. Therefore, $C_i$ is a linear basis for $V_n^{i,1}$.

Now, $M_n(i,1)=\dim V_n^{i-1,1}=|C_{i-1}|$. Let $G_{\tilde{\gamma}}\in V_{n}^{i,1}$ and $k$ be the unique number that $\tilde{\gamma}_{k2}=1$. First, from definition of $G$, $\tilde{\gamma}\notin B_n$ implies $G_{\tilde{\gamma}}\in I_n$ and $G_{\tilde{\gamma}}\in Img(\Phi)$.

If $i=n-1$, then $|\tilde{\gamma}|=n-1$. If $k<\ell(\tilde{\gamma})$, since $\displaystyle\sum_{j=k+1}^n\tilde{\gamma}_{1j}\geq n-k$, we will be using rules (4.3) when applying $\phi^{-1}$. This reduces the length by $2$ while the size by $1$, which means $G_{\tilde{\gamma}}\in Img(\Phi)$. If $k=\ell(\tilde{\gamma})$, we only use rules (4.1) and (4.2) when applying $\phi^{-1}$. In this case, $G_{\tilde{\gamma}}\notin Img(\Phi)$ whenever $\tilde{\gamma}\in B_n$. Therefore, $|C_{n-2}|$ is the Catalan number $\displaystyle\frac{1}{n}\binom{2n-2}{n-1}$.

If $1\leq i\leq n-2$, $|\tilde{\gamma}|\leq n-2$. From the definition of $\phi$, $G_{\tilde{\gamma}}\notin Img(\Phi)$ if and only if $G_{\big( {\tilde{\gamma}_{11} \ \cdots \ \tilde{\gamma}_{1(n-1)} \atop \tilde{\gamma}_{21} \ \cdots \ \tilde{\gamma}_{2(n-1)}} \big)}\in V_{n-1}^{j,k}\setminus Img(\Phi)$ for some $1\leq j\leq i,1\leq k\leq 2$. Therefore, $M_n(i,2)=\displaystyle\sum_{1\leq j\leq i,1\leq k\leq 2}M_{n-1}(j,k)$ for $1\leq i\leq n-2$.
\end{proof}

By the symmetry $M_n(a,b)=M_n(b,a)$, we obtain the first to rows of the Hilbert matrix, namely $M_n(2,n-1)=\displaystyle\frac{1}{n}\binom{2n-2}{n-1}$, $M_n(2,i)=\displaystyle\sum_{1\leq j\leq i,1\leq k\leq 2}M_{n-1}(k,j)$ for $1\leq i\leq n-2$, and $M_n(2,i)=0$ for $i\geq n$.

This method can be applied directly to some other terms. To be more specific, for $2i+j\leq n$,  the set $\{G_{\tilde{\alpha}} \mid G_{\tilde{\alpha}}\notin Img(\Phi), \ell(\tilde{\alpha})\leq n\}$ is a linear basis in $V_n^{i,j}$. Therefore, the formula for each column stabilizes when the number of variables is large enough. However, it fail in other terms and this set is not a linear basis in general.

\end{document}